\documentclass{article}

\usepackage[preprint]{neurips_2026}
\usepackage{enumitem}
\usepackage[utf8]{inputenc}
\usepackage[T1]{fontenc}
\usepackage{hyperref}
\usepackage{url}
\usepackage{booktabs}
\usepackage{amsfonts}
\usepackage{nicefrac}
\usepackage{xcolor}
\usepackage{amsmath, amssymb, amsthm}
\usepackage{bm}
\usepackage{algorithm}
\usepackage{algorithmic}
\usepackage{graphicx}

\usepackage{amsmath,amsfonts,bm}

\def\eqref#1{equation~\ref{#1}}

\def\1{\bm{1}}

\newcommand{\cX}{\mathcal{X}}

\def\beq{\begin{equation}}
\def\eeq{\end{equation}}
\def\ba{\begin{array}}
\def\ea{\end{array}}
\def\beann{\begin{eqnarray*}}
\def\eeann{\end{eqnarray*}}
\def\bea{\begin{eqnarray}}
\def\eea{\end{eqnarray}}

\def\BT{\begin{theorem}}
\def\ET{\end{theorem}}
\def\BL{\begin{lemma}}
\def\EL{\end{lemma}}
\def\BC{\begin{corollary}}
\def\EC{\end{corollary}}
\def\BE{\begin{example}}
\def\EE{\end{example}}
\def\BD{\begin{definition}}
\def\ED{\end{definition}}
\def\BR{\begin{remark}}
\def\ER{\end{remark}}
\def\BAS{\begin{assumption}}
\def\EAS{\end{assumption}}
\def\BI{\begin{itemize}}
\def\EI{\end{itemize}}
\def\BP{\begin{proposition}}
\def\EP{\end{proposition}}

\def\BMP{\begin{minipage}{9.5cm}}
\def\EMP{\end{minipage}}
\def\MPT{\begin{minipage}{11.5cm}}
\def\EPT{\end{minipage}}

\def\va{{\bm{a}}}
\def\vb{{\bm{b}}}
\def\vc{{\bm{c}}}

\def\vf{{\bm{f}}}

\def\vr{{\bm{r}}}

\def\vu{{\bm{u}}}

\def\vx{{\bm{x}}}
\def\vy{{\bm{y}}}
\def\vz{{\bm{z}}}
\def\vX{{\bm{X}}}
\def\vR{{\bm{R}}}
\def\vA{{\bm{A}}}
\def\vB{{\bm{B}}}
\def\vV{{\bm{V}}}

\def\mH{{\bm{H}}}

\def\mM{{\bm{M}}}

\def\mV{{\bm{V}}}

\def\mX{{\bm{X}}}

\DeclareMathAlphabet{\mathsfit}{\encodingdefault}{\sfdefault}{m}{sl}
\SetMathAlphabet{\mathsfit}{bold}{\encodingdefault}{\sfdefault}{bx}{n}

\newcommand{\E}{\mathbb{E}}

\newcommand{\R}{\mathbb{R}}

\DeclareMathOperator*{\argmin}{arg\,min}

\newtheorem{theorem}{Theorem}
\newtheorem{lemma}{Lemma}
\newtheorem{assumption}{Assumption}
\newtheorem{example}{Example}
\newtheorem{proposition}{Proposition}
\newtheorem{corollary}{Corollary}
\newtheorem{remark}{Remark}

\title{Stochastic Compositional Optimization via \\ Hybrid Momentum Frank--Wolfe}

\author{%
  El Mahdi Chayti\\
  Machine Learning and Optimization Laboratory (MLO)\\
  EPFL, Switzerland\\
  \texttt{el-mahdi.chayti[AT]epfl.ch}
}

\begin{document}

\maketitle

\begin{abstract}
Stochastic compositional optimization minimizes objectives of the form $\min_{\vx \in \cX} F(\vf(\vx), \vx)$, where $\vf$ is accessible only through noisy stochastic queries. Existing methods for this problem assume that the outer function $F$ is continuously differentiable, which excludes many practically important applications such as robust max-of-losses, Conditional Value-at-Risk, and norm regularizers. We propose the Hybrid Momentum Stochastic Frank--Wolfe algorithm, which drops the smoothness assumption on $F$. By combining a momentum-based Jacobian tracker with a Taylor-corrected function tracker, the algorithm feeds an entire stochastic linearization---rather than a single gradient---into a generalized linear minimization oracle. We establish an $\mathcal{O}(K^{-1/4})$ convergence rate in the generalized Frank--Wolfe gap for non-convex objectives with $L_F$-Lipschitz outer functions, matching the optimal complexity for projection-free single-sample stochastic methods under expected smoothness. The analysis extends to heavy-tailed noise oracles with bounded $r$-th moments for $r \in (1, 2]$ and recovers the deterministic rates of \citet{vladarean2023first} as the noise vanishes.
\end{abstract}

\section{Introduction}

Modern machine learning increasingly demands optimization paradigms that go beyond classical Empirical Risk Minimization. Risk-averse learning, distributionally robust optimization, multi-task learning, and fairness-constrained training all give rise to objectives of the form
\begin{equation}
    \min_{\vx \in \cX} \varphi(\vx) := F(\vf(\vx), \vx) \quad \text{where} \quad \vf(\vx) = \E_{\xi}[\tilde{\vf}(\vx; \xi)], \label{eq:problem}
\end{equation}
where $\cX \subset \R^d$ is convex and compact, $\vf : \cX \to \R^n$ is a vector-valued inner mapping accessible only via stochastic queries, and $F : \R^n \times \cX \to \R$ is a deterministic outer function. This is the \emph{fully composite} formulation introduced in  \citet{doikov2022high} and revisited in its projection-free variant by \citep{vladarean2023first} (in the deterministic case); this formulation generalizes both classical compositional optimization $F(\vf(\vx))$ and additive composite optimization.

Solving \eqref{eq:problem} stochastically poses a fundamental challenge: by Jensen's inequality, naive plug-in estimators are biased, $\E[F(\tilde\vf(\vx; \xi), \vx)] \ne F(\E[\tilde\vf(\vx; \xi)], \vx)$. A line of work culminating in \citet{chen2020solving, balasubramanian2022stochastic, xiao2022projection} addresses this by tracking the inner function with momentum-based estimators and feeding the result through the chain rule $\nabla \varphi(\vx) = \nabla \vf(\vx)^\top \nabla F(\vf(\vx))$. However, the chain rule \emph{requires $F$ to be differentiable}---an assumption that excludes the very examples that motivate fully composite optimization in the first place: max-type losses, CVaR, $\ell_1$ regularizers, and constraint-penalty formulations.

\citet{vladarean2023first} resolved this in the deterministic setting through a \emph{generalized linear minimization oracle} (GLMO), which minimizes $F$ evaluated on a linearization of $\vf$, treating $F$ as a black box rather than differentiating through it. They explicitly leave the stochastic case as an open problem.

\paragraph{Contributions.} This paper addresses that open problem.
\begin{enumerate}
\item We propose the \emph{Hybrid Momentum Stochastic Frank--Wolfe} algorithm (Algorithm~\ref{alg:hybrid_sfw}), the first stochastic projection-free method for fully composite problems with non-smooth outer $F$. The algorithm tracks both the Jacobian and the function value of $\vf$, and feeds the resulting affine model into a GLMO that handles non-smooth $F$ directly.
\item We present two tracker variants offering a memory/assumption trade-off: \emph{Variant I} (Polyak update; memoryless but requires $\vf$ Lipschitz); \emph{Variant II} (Taylor-corrected; requires storing one previous iterate but eliminates the Lipschitz assumption).
\item Under heavy-tailed noise with bounded $r$-th moment ($r \in (1, 2]$), we establish $\mathcal{O}(K^{-(r-1)/(3r-2)})$ convergence in the generalized Frank--Wolfe gap (Theorem~\ref{thm:nonconvex}). At $r = 2$, this gives $\mathcal{O}(K^{-1/4})$, matching the minimax-optimal rate for single-sample projection-free stochastic methods under expected smoothness \citep{arjevani2023lower}.
\item For convex objectives, we obtain $\mathcal{O}(K^{-(r-1)/(2r-1)})$ primal rate (Theorem~\ref{thm:convex}), recovering $\mathcal{O}(K^{-1/3})$ at $r = 2$.
\item The algorithm interpolates with the deterministic regime: when the oracle is exact, setting the momentum parameters to one recovers the deterministic Basic Method of \citet{vladarean2023first} together with its $\mathcal{O}(1/K)$ convex and $\tilde{\mathcal{O}}(1/\sqrt{K})$ non-convex rates (Corollary~\ref{cor:deterministic}).
\item All constants are derived explicitly in terms of problem parameters, and we provide numerical experiments on three real-world problems exhibiting heavy-tailed noise: robust minimax sparse regression on \texttt{a9a} \citep{chang2011libsvm}, CVaR portfolio optimization on S\&P~500 returns \citep{wrds_compustat}, and robust matrix completion on \texttt{MovieLens-100K} \citep{harper2015movielens}.
\end{enumerate}

\paragraph{Related work.}
The deterministic foundation for our approach was established by \citet{vladarean2023first}, who introduced the GLMO and analyzed the deterministic Basic and Accelerated methods for fully composite optimization. In a related vein, \citet{doikov2022high} investigated higher-order methods for this same problem class.

In the stochastic regime, early work by \citet{wang2017stochastic} introduced SCGD using a two-timescale stepsize schedule. This was later streamlined by \citet{ghadimi2020single} via the single-timescale NASA algorithm. Subsequent advancements include SCSC \citep{chen2020solving}, which achieved optimal SGD-like rates, and Linearized NASA \citep{balasubramanian2022stochastic}, which established level-independent rates. However, all of these methods rely on exact projections and cannot operate in the projection-free setting.

For projection-free stochastic compositional optimization, the current state-of-the-art is LiNASA+ICG \citep{xiao2022projection}. While it achieves an $\mathcal{O}(K^{-1/4})$ convergence rate, it suffers from a high per-iteration cost, requiring $\mathcal{O}(K^{3/2})$ LMO calls (yielding an overall $\mathcal{O}(\epsilon^{-4})$ SFO and $\mathcal{O}(\epsilon^{-6})$ LMO complexity). Furthermore, it fundamentally requires the outer function $F$ to be smooth. Similarly, \citet{akhtar2021projection} tackle the two-level case but impose strictly stronger assumptions. Crucially, both of these methods require differentiating through $F$, rendering them inapplicable to the non-smooth outer functions considered in our work.

Finally, our variance reduction techniques draw inspiration from the standard (non-compositional) stochastic Frank--Wolfe literature. \citet{mokhtari2020stochastic} demonstrated an $\mathcal{O}(K^{-1/3})$ rate for convex objectives using momentum tracking, while \citet{yurtsever2019conditional} achieved improved non-convex rates utilizing the STORM variance reduction technique originally developed by \citet{cutkosky2019momentum}. To correct the bias inherent in momentum tracking, \citet{hassani2020stochastic} and \citet{zhang2019samplestochasticfrankwolfe} proposed using Hessian-vector products coupled with uniform random variables. This bias correction was recently refined by \citet{chayti2026ransomsecondordermomentumrandomized} using Beta-distributed random variables. We adapt both of these fundamental techniques to our fully composite framework, detailing a STORM-based acceleration in Appendix~\ref{sec:storm_extension} and a Hessian-based acceleration in Appendix~\ref{sec:hessian_extension}.

\section{Problem Setup and Assumptions}

\paragraph{Notation.} We use $\|\cdot\|$ for the Euclidean norm (Frobenius norm for matrices), $D_{\cX} := \max_{\vx, \vy \in \cX} \|\vx - \vy\|$ for the diameter of $\cX$, and $\mathbf{1} \in \R^n$ for the all-ones vector. 

\textbf{Generalized Frank--Wolfe gap.} For $\vy \in \cX$, the \emph{generalized Frank--Wolfe gap} is
\begin{equation} \label{eq:gen_fw_gap}
\hat\Delta(\vy) := \varphi(\vy) - \min_{\vx \in \cX} F\big(\vf(\vy) + \nabla \vf(\vy)(\vx - \vy),\, \vx\big) \ge 0,
\end{equation}
which generalizes the classical FW gap i.e.: when $F(\vu, \vx) \equiv u^{(1)}$, $\hat\Delta(\vy)$ reduces to $\max_{\vx \in \cX} \langle \nabla f_1(\vy), \vy - \vx \rangle$.

Note that by construction, $\hat\Delta(\vy) \ge 0$, and $\hat\Delta(\vy) = 0$ characterizes stationarity in the non-smooth setting \citep{vladarean2023first}.

\subsection{Assumptions}

\begin{assumption}[Domain compactness] \label{assump:domain}
$\cX \subset \R^d$ is convex and compact with diameter $D_{\cX} < \infty$.
\end{assumption}

\begin{assumption}[Outer function regularity] \label{assump:outer}
$F : \R^n \times \cX \to \R$ is (i) $L_F$-Lipschitz in $\vu$, (ii) monotone non-decreasing in $\vu$ (i.e., $\vu_1 \le \vu_2$ componentwise implies $F(\vu_1, \vx) \le F(\vu_2, \vx)$), and (iii) subhomogeneous in $\vu$: $F(\gamma \vu, \vx) \le \gamma F(\vu, \vx)$ for all $\gamma \ge 1$. $F$ is \emph{not} assumed to be differentiable.
\end{assumption}

\begin{assumption}[Inner function smoothness] \label{assump:inner_smooth}
$\vf : \cX \to \R^n$ has $L$-Lipschitz Jacobian: $\|\nabla \vf(\vx) - \nabla \vf(\vy)\|_{\mathrm{op}} \le L \|\vx - \vy\|$ for all $\vx, \vy \in \cX$.
\end{assumption}

\begin{assumption}[Heavy-tailed stochastic oracle] \label{assump:noise}
There exists $r \in (1, 2]$ and constants $\sigma_f, \sigma_g \ge 0$ such that for every $\vx \in \cX$, the stochastic oracle returns unbiased estimates $\tilde\vf(\vx; \xi), \nabla\tilde\vf(\vx; \xi)$ satisfying
\[
\E_\xi\|\tilde\vf(\vx; \xi) - \vf(\vx)\|^r \le \sigma_f^r,
\qquad
\E_\xi\|\nabla \tilde\vf(\vx; \xi) - \nabla \vf(\vx)\|^r \le \sigma_g^r.
\]
The classical bounded-variance setting corresponds to $r = 2$.
\end{assumption}

The following two assumptions are used in specific results.

\begin{assumption}[Lipschitz inner function] \label{assump:inner_lipschitz}
$\vf$ is $G$-Lipschitz: $\|\vf(\vx) - \vf(\vy)\| \le G \|\vx - \vy\|$ for all $\vx, \vy \in \cX$. Used only for Variant I.
\end{assumption}

\begin{assumption}[Convexity for the convex setting] \label{assump:convex}
Each $f_i$ is convex on $\cX$ and $F(\vu, \vx)$ is jointly convex in $(\vu, \vx)$. Used only for Theorem~\ref{thm:convex}; combined with monotonicity of $F$, this implies $\varphi$ is convex on $\cX$.
\end{assumption}

\paragraph{Discussion of the assumptions.}
Assumptions~\ref{assump:domain}, \ref{assump:outer}(ii)--(iii), \ref{assump:inner_smooth}, and \ref{assump:convex} are standard in the deterministic fully composite optimization literature \citep{vladarean2023first, doikov2022high}. Assumption~\ref{assump:noise} is standard in stochastic optimization with heavy-tailed gradients and reduces to bounded variance at $r = 2$. Assumption~\ref{assump:inner_lipschitz} is standard for stochastic gradient methods.

The single new condition in our setting is Assumption~\ref{assump:outer}(i), the Lipschitzness of $F$ in $\vu$. This is necessary because $F$ is non-smooth: in the absence of differentiability, Lipschitzness is the natural quantitative substitute that controls the deviation between the GLMO evaluated on the exact and tracked surrogates (Lemma~\ref{lem:glmo_main}). The Lipschitz constant $L_F$ admits closed-form expressions for all the motivating examples below: $L_F = 1$ for max-of-losses, $\ell_p$-norm regularization, and composite regularization; $L_F \le 1/(\alpha\sqrt{n})$ for CVaR at level $\alpha$; and $L_F \le \sqrt{(1+\rho)/n}$ for $\chi^2$-DRO with radius $\rho$.

\subsection{Composite Curvature}

The composite curvature constant generalizes the standard FW curvature to the non-smooth outer setting:
\begin{equation} \label{eq:curvature_def}
\mathcal{S} := \sup_{\substack{\vx, \vy \in \cX \\ \gamma \in (0, 1]}} \frac{2}{\gamma^2} \Big[ F(\vf(\vy_\gamma), \vy_\gamma) - F(\vf(\vx) + \gamma \nabla \vf(\vx)(\vy - \vx), \vy_\gamma) \Big],
\end{equation}
where $\vy_\gamma := \vx + \gamma(\vy - \vx)$. Under Assumptions~\ref{assump:domain}--\ref{assump:inner_smooth}, $\mathcal{S} \le L_F L D_{\cX}^2 \sqrt{n}$ (Proposition~\ref{prop:curvature} in the appendix).

\section{Algorithm}

The algorithm maintains two stochastic trackers: a Polyak Jacobian tracker $\mV_k$ and a function-value tracker $\vz_k$ in one of two variants. At each step, the trackers define an affine surrogate $\tilde l_k(\vx) := \vz_k + \mV_k(\vx - \vy_k)$, fed into the GLMO.

\begin{algorithm}[h]
\caption{Hybrid Momentum Stochastic Frank--Wolfe}
\label{alg:hybrid_sfw}
\begin{algorithmic}[1]
\STATE \textbf{Input:} $\vy_0 \in \cX$, step-size sequences $\{\gamma_k, \beta_k, \rho_k\} \subset (0, 1]$, total iterations $K$.
\STATE Draw $\xi_0$; set $\vz_0 \!=\! \tilde\vf(\vy_0; \xi_0)$, $\mV_0 \!=\! \nabla \tilde\vf(\vy_0; \xi_0)$.
\FOR{$k = 1, \ldots, K$}
\STATE Draw $\xi_k$ at $\vy_k$.
\STATE \textbf{Jacobian tracker:} $\mV_k = (1-\beta_k)\mV_{k-1} + \beta_k \nabla \tilde\vf(\vy_k; \xi_k)$.
\STATE \textbf{Function tracker} (one variant):
\begin{align*}
\text{Variant I (Polyak):}\quad & \vz_k = (1-\rho_k)\vz_{k-1} + \rho_k \tilde\vf(\vy_k; \xi_k), \\
\text{Variant II (Taylor):}\quad & \vz_k = (1-\rho_k)[\vz_{k-1} + \mV_k(\vy_k - \vy_{k-1})] + \rho_k \tilde\vf(\vy_k; \xi_k).
\end{align*}
\STATE \textbf{GLMO:} $\vx_{k+1} \in \argmin_{\vx \in \cX} F(\vz_k + \mV_k(\vx - \vy_k),\, \vx)$.
\STATE \textbf{Update:} $\vy_{k+1} = (1-\gamma_k)\vy_k + \gamma_k \vx_{k+1}$.
\ENDFOR
\STATE \textbf{Return:} $\vy_K$.
\end{algorithmic}
\end{algorithm}

\paragraph{Variant trade-off.}
Variant I is memoryless and requires only Lipschitz $\vf$ (Assumption~\ref{assump:inner_lipschitz}); its tracking error scales linearly with the step distance. Variant II uses the first-order Taylor extrapolation $\mV_k(\vy_k - \vy_{k-1})$ to predict $\vf(\vy_k)$ from $\vf(\vy_{k-1})$, reducing the tracking error to a quadratic Taylor remainder; it eliminates Assumption~\ref{assump:inner_lipschitz} but requires storing the previous iterate.

\paragraph{Generalized LMO.}
The GLMO at line~7 minimizes $F$ evaluated on the affine surrogate $\tilde l_k(\vx)$. As shown by \citet{vladarean2023first}, this oracle is tractable for all the motivating examples below: it reduces to a small linear program (or, in the case of $\ell_2$-norm or $\chi^2$-DRO, a second-order cone program). Crucially, the GLMO handles the non-smoothness of $F$ directly, without requiring a (sub)gradient.
We also note that the GLMO reduces to the classical LMO used in Frank--Wolfe methods \citep{frank1956algorithm,jaggi2013revisiting} when $F(\vu, \vx) \equiv u^{(1)}$.

\section{Motivating Examples}

We illustrate the GLMO on five paradigmatic non-smooth fully composite problems. In each case, the stochastic inner function is $\vf(\vx) = \E_\xi[\tilde\vf(\vx; \xi)]$.

\begin{example}[Robust Minimax Optimization]
Consider $n$ distinct competing objectives or losses, $f_i(\vx) = \E_{\xi_i}[\tilde{f}_i(\vx; \xi_i)]$. We aim to minimize the worst-case expected loss over a structured domain (e.g., an $\ell_1$-ball for sparsity) \citep{shalev2016minimizing}:
\begin{equation}
    \min_{\vx \in \cX} \max_{1 \le i \le n} f_i(\vx).
\end{equation}
Here, the outer function is $F(\vu) = \max_i u^{(i)}$, which is globally $1$-Lipschitz but non-differentiable. \textbf{Tractability:} The GLMO requires minimizing the maximum of $n$ affine functions over $\cX$, which trivially reformulates as a highly efficient Linear Program (LP) by introducing a single auxiliary scalar variable \citep{vladarean2023first}.
\end{example}

\begin{example}[Norm-Regularized Multi-Task Learning]
In multi-task learning, one often minimizes the norm of a vector of expected tasks to encourage joint sparsity or uniformly bounded errors across tasks \citep{sener2018multi}:
\begin{equation}
    \min_{\vx \in \cX} \|\vf(\vx)\|_p.
\end{equation}
Here, $F(\vu) = \|\vu\|_p$. For $p \ge 1$, $F$ is globally Lipschitz but non-smooth at the origin. \textbf{Tractability:} For $p=1$ or $\infty$, the GLMO reduces to an LP. For $p=2$, it translates to an SOCP, efficiently solvable via interior point methods.
\end{example}

\begin{example}[Conditional Value at Risk (CVaR)]
Risk-averse learning often minimizes the CVaR at level $\alpha \in (0,1)$, which focuses on the $\alpha$-fraction of the worst-case expected losses across $n$ tasks \citep{rockafellar2000optimization}:
\begin{equation}
    \min_{\vx \in \cX} \text{CVaR}_\alpha(\vf(\vx)) = \min_{\vx \in \cX} \inf_{\tau \in \R} \left\{ \tau + \frac{1}{\alpha n} \sum_{i=1}^n \max(0, f_i(\vx) - \tau) \right\}.
\end{equation}
Here, $F(\vu)$ encapsulates the CVaR operator. \textbf{Tractability:} Evaluating the GLMO simply requires minimizing a piecewise affine convex function over $\cX$. By adding auxiliary variables for the rectified linear units ($\max(0, \cdot)$), the GLMO cleanly formulates as an LP \citep{vladarean2023first}.
\end{example}

\begin{example}[Distributionally Robust Optimization (DRO)]
In DRO, one minimizes the expected loss with respect to the worst-case distribution $\mathbf{q}$ within a $\chi^2$-divergence ball of radius $\rho$ around the uniform distribution \citep{namkoong2016stochastic}:
\begin{equation}
    \min_{\vx \in \cX} \max_{\mathbf{q} \in \Delta_n, D_{\chi^2}(\mathbf{q}, \frac{1}{n}\mathbf{1}) \le \rho} \sum_{i=1}^n q_i f_i(\vx).
\end{equation}
Here, $F(\vu)$ is the supremum over the $\chi^2$ ball. \textbf{Tractability:} Through convex duality, this minimax objective can be rewritten as a minimization over an affine surrogate, reducing the GLMO to a low-dimensional convex problem bounded by a quadratic constraint (an SOCP).
\end{example}

\begin{example}[Composite Regularized Optimization]
Consider a standard expected risk minimization problem augmented with a deterministic, potentially non-smooth, convex regularizer $\Xi(\vx)$ \citep{yurtsever2018conditional}:
\begin{equation}
    \min_{\vx \in \cX} \E_{\xi}[\tilde{f}(\vx; \xi)] + \Xi(\vx).
\end{equation}
Here, $f(\vx) = \E[\tilde{f}(\vx)]$, and the outer function is explicitly dependent on $\vx$: $F(u, \vx) = u + \Xi(\vx)$. \textbf{Tractability:} The GLMO computes $\argmin_{\vx \in \cX} \left[ \vz_k + \langle \mV_k, \vx - \vy_k \rangle + \Xi(\vx) \right]$. Discarding constant terms, this strictly reduces to $\argmin_{\vx \in \cX} \langle \mV_k, \vx \rangle + \Xi(\vx)$, which exactly recovers the highly tractable classical composite Frank--Wolfe oracle \citep{yurtsever2018conditional}.
\end{example}

Further examples covering fairness-constrained learning \citep{agarwal2018reductions, donini2018empirical} and exact penalty methods \citep{cotter2019two} are deferred to Appendix~\ref{sec:examples_appendix}.

\section{Convergence Analysis}

We present the main convergence guarantees for non-convex objectives (Theorem~\ref{thm:nonconvex}) and convex objectives (Theorem~\ref{thm:convex}), and discuss the deterministic-noiseless interpolation (Corollary~\ref{cor:deterministic}). All proofs are deferred to Appendices~\ref{sec:proof_nonconvex} and \ref{sec:proof_convex}.

\subsection{Non-Convex Convergence}

\begin{theorem}[Non-convex convergence] \label{thm:nonconvex}
Suppose Assumptions~\ref{assump:domain}--\ref{assump:noise} hold. Choose the constant step sizes
\begin{equation}\label{eq:nonconvex_schedule}
\gamma_k \equiv \gamma = K^{-(2r-1)/(3r-2)}, \qquad \beta_k = \rho_k \equiv \beta = K^{-r/(3r-2)}.
\end{equation}
Then for each variant, Algorithm~\ref{alg:hybrid_sfw} satisfies
\[
\min_{1 \le k \le K} \E[\hat\Delta_k] = \mathcal{O}\!\left(K^{-(r-1)/(3r-2)}\right).
\]
Variant I requires the additional Assumption~\ref{assump:inner_lipschitz}; Variant II does not. Explicit constants $M_{\mathrm{I}}$ and $M_{\mathrm{II}}$ in the $\mathcal{O}(\cdot)$ notation are given in equations \eqref{eq:MI_explicit}--\eqref{eq:MII_explicit} of Appendix~\ref{sec:proof_nonconvex}.
\end{theorem}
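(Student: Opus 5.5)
The plan is a one-step descent inequality for $\varphi$ along the Frank--Wolfe update, with an additive error controlled by the tracking errors $\vf(\vy_k)-\vz_k$ and $\nabla\vf(\vy_k)-\mV_k$, followed by recursive bounds on those errors in expectation.

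\textbf{Step 1: descent inequality.} By the definition of $\mathcal{S}$ in \eqref{eq:curvature_def} with $\vx=\vy_k$ and $\vy=\vx_{k+1}$,
\[
\varphi(\vy_{k+1}) \le F\big(\vf(\vy_k)+\gamma\nabla\vf(\vy_k)(\vx_{k+1}-\vy_k),\vy_{k+1}\big)+\tfrac{\gamma^2}{2}\mathcal{S}.
\]
Write $\ell_k(\vx):=\vf(\vy_k)+\nabla\vf(\vy_k)(\vx-\vy_k)$. Assuming joint convexity of $F$ as in \citet{vladarean2023first}'s basic-method analysis, or otherwise using monotonicity and subhomogeneity, bound the first term by $(1-\gamma)\varphi(\vy_k)+\gamma F(\ell_k(\vx_{k+1}),\vx_{k+1})$. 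Next, replace the exact linearization by the surrogate $\tilde l_k$. Lipschitzness of $F$ and optimality of $\vx_{k+1}$ for the GLMO (Lemma~\ref{lem:glmo_main}) give
\[
F(\ell_k(\vx_{k+1}),\vx_{k+1}) \le \min_{\vx} F(\ell_k(\vx),\vx) + 2L_F\big(\|\vf(\vy_k)-\vz_k\| + D_{\cX}\|\nabla\vf(\vy_k)-\mV_k\|\big).
\]
Combining, and letting $e_k$ and $E_k$ denote the function and Jacobian errors,
\[
\varphi(\vy_{k+1})\le\varphi(\vy_k)-\gamma\hat\Delta_k+2\gamma L_F(e_k+D_{\cX}E_k)+\tfrac{\gamma^2}{2}\mathcal{S}.
\]
Telescoping, using $\varphi(\vy_1)-\varphi^\star\le$ const on the compact set, gives
\[
\min_k\E\hat\Delta_k\le \frac{C}{\gamma K}+2L_F\max_k\E(e_k+D_{\cX}E_k)+\tfrac{\gamma}{2}\mathcal{S}.
\]

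\textbf{Step 2: tracking errors.} For the Jacobian,
\[
\mV_k-\nabla\vf(\vy_k)=(1-\beta)\big(\mV_{k-1}-\nabla\vf(\vy_{k-1})\big)+(1-\beta)\big(\nabla\vf(\vy_{k-1})-\nabla\vf(\vy_k)\big)+\beta\,\text{noise}_k .
\]
The drift term is bounded by $L\gamma D_{\cX}$. Unrolling gives a deterministic bias $\lesssim L\gamma D_{\cX}/\beta$ plus a martingale sum $\beta\sum_j(1-\beta)^{k-j}\text{noise}_j$. For the martingale under $r$-th moments, use the von Bahr--Esseen type inequality $\E\|\sum_j X_j\|^r\le 2\sum_j\E\|X_j\|^r$. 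This gives
\[
\E\|\cdot\|\le(\E\|\cdot\|^r)^{1/r}\lesssim\sigma_g\big(\beta^r/\beta\big)^{1/r}=\sigma_g\beta^{(r-1)/r},
\]
and the initialization term $(1-\beta)^k\sigma_g$ is summable after averaging. For Variant I the same argument with drift $G\gamma D_{\cX}$ (Assumption~\ref{assump:inner_lipschitz}) gives $\E e_k\lesssim G\gamma D_{\cX}/\beta+\sigma_f\beta^{(r-1)/r}$. For Variant II, the drift becomes $\vf(\vy_k)-\vf(\vy_{k-1})-\mV_k(\vy_k-\vy_{k-1})$. This is bounded by $\tfrac L2\gamma^2D_{\cX}^2+\gamma D_{\cX}E_k$, so $\E e_k\lesssim \gamma D_{\cX}\max\E E_j/\beta+L\gamma^2D^2/\beta+\sigma_f\beta^{(r-1)/r}$, and no Lipschitz constant $G$ is needed.

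\textbf{Step 3: balance.} The total bound is $\frac1{\gamma K}+\gamma+\frac{\gamma}{\beta}+\beta^{(r-1)/r}$. With the stated choices, $\gamma K=K^{(r-1)/(3r-2)}$, $\gamma/\beta=K^{-(r-1)/(3r-2)}$, and $\beta^{(r-1)/r}=K^{-(r-1)/(3r-2)}$, while $\gamma$ is smaller. Every term is therefore $\mathcal{O}(K^{-(r-1)/(3r-2)})$. In Variant II the cross term $\gamma E/\beta$ is of lower order, and its $\gamma^2/\beta$ term is also smaller.

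\textbf{Main obstacle.} I expect the hardest part to be Step 1's handling of the $\vx$-dependence and convexity: passing from $F$ at the point $\vy_{k+1}$ to a convex combination requires structure (joint convexity in $(\vu,\vx)$, or the monotone/subhomogeneous argument of \citet{vladarean2023first}). I would first try to reuse their basic-method inequality verbatim with the exact linearization, and then insert the GLMO perturbation lemma. The heavy-tailed martingale bound is the secondary technical point.
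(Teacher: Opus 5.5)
Your proposal is essentially the paper's proof: the descent step is Lemma~\ref{lem:glmo_main} (curvature bound, convex-combination step, two $L_F$-Lipschitz swaps around the GLMO minimizer), the trackers are unrolled and bounded in $r$-th moment via von Bahr--Esseen and then Jensen, as in Lemmas~\ref{lem:grad_tracker_main}--\ref{lem:func_taylor_main}, and the telescoped bound is balanced term by term. One correction: telescope with the average $\frac1K\sum_k$ of the tracking errors rather than $\max_k$, because the $k=0$ error does not decay and is only absorbed through $\sum_k(1-\beta)^k\le 1/\beta$.

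The obstacle you flag is real, and you should drop the ``otherwise'' branch. Monotonicity and subhomogeneity alone do not give the convex-combination bound. Take $\vf(x)=x$, $F(u,x)=\min\{u,0\}$, $\cX=[-1,1]$, $\vy_k=\tfrac12$ and zero tracking errors. Then $\mathcal S=0$, $\hat\Delta_k=1$ and $\vx_{k+1}=-1$, so $\varphi(\vy_{k+1})=0$ for $\gamma<1/3$, whereas Lemma~\ref{lem:glmo_main} would require $\varphi(\vy_{k+1})\le-\gamma$. Joint convexity of $F$ is therefore genuinely needed. The paper's own Step~2 of Lemma~\ref{lem:glmo_main} tacitly uses it, even though Assumption~\ref{assump:convex} is nominally reserved for the convex theorem.
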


\paragraph{Proof sketch.} The proof combines a one-step progress bound for the GLMO update (Lemma~\ref{lem:glmo_main}) with bounds on the $r$-th moment of the tracking errors (Lemmas~\ref{lem:grad_tracker_main}--\ref{lem:func_taylor_main} for the Jacobian, Polyak, and Taylor trackers respectively). Telescoping, applying Jensen's inequality, and balancing terms with the schedule \eqref{eq:nonconvex_schedule} yields the rate. See Appendix~\ref{sec:proof_nonconvex}.

\paragraph{Optimality.}
At $r = 2$, Theorem~\ref{thm:nonconvex} gives the rate $\mathcal{O}(K^{-1/4})$, matching the lower bound of \citet{arjevani2023lower} for single-sample stochastic methods under expected smoothness. Faster rates such as $\mathcal{O}(K^{-1/3})$ via STORM-style momentum \citep{cutkosky2019momentum} or via second-order corrections \citep{hassani2020stochastic} require strictly stronger assumptions. In particular, Proposition~\ref{prop:hessian_implies_avg} (Appendix~\ref{sec:hessian_extension}) shows that bounded Hessian-noise moments imply $r$-average smoothness, so any acceleration to the $K^{-(r-1)/(2r-1)}$ rate is fundamentally tied to a stronger smoothness condition.

\subsection{Convex Convergence}

\begin{theorem}[Convex convergence] \label{thm:convex}
Suppose Assumptions~\ref{assump:domain}--\ref{assump:noise} and \ref{assump:convex} hold. Let $\vx^* \in \argmin_{\vx \in \cX} \varphi(\vx)$ and $\Phi_0 := \varphi(\vy_0) - \varphi(\vx^*)$. Choose the decreasing step sizes
\begin{equation}\label{eq:convex_schedule}
\gamma_k = \frac{2}{k+2}, \qquad \beta_k = \rho_k = \min\!\left\{1,\ \frac{c_0}{(k+k_0)^{r/(2r-1)}}\right\},
\end{equation}
with $c_0$ and $k_0$ specified in Appendix~\ref{sec:proof_convex}. Then for each variant of Algorithm~\ref{alg:hybrid_sfw} (Variant I additionally assuming Assumption~\ref{assump:inner_lipschitz}),
\[
\E[\varphi(\vy_K) - \varphi(\vx^*)] \;\le\; \E[\hat\Delta_K] \;=\; \mathcal{O}\!\left((K+2)^{-(r-1)/(2r-1)}\right).
\]
The explicit constant $A_{\mathrm{cvx}}$ is given in equation~\eqref{eq:Acvx_def} of Appendix~\ref{sec:proof_convex}.
\end{theorem}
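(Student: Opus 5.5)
The plan is the standard Frank--Wolfe energy argument, run on $\Phi_k := \varphi(\vy_k) - \varphi(\vx^*)$. We combine the one-step GLMO progress bound (Lemma~\ref{lem:glmo_main}) with convexity to get a recursion of the form $\E\Phi_{k+1} \le (1-\gamma_k)\E\Phi_k + \gamma_k \cdot(\text{tracking error}) + \tfrac{\gamma_k^2}{2}\mathcal{S}$. The tracking errors are then controlled by the $r$-th moment lemmas (Lemmas~\ref{lem:grad_tracker_main}--\ref{lem:func_taylor_main}) under the decreasing schedule \eqref{eq:convex_schedule}.

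\textbf{One-step bound.} By the definition of $\mathcal{S}$ in \eqref{eq:curvature_def}, with $\vx=\vy_k$, $\vy=\vx_{k+1}$, $\gamma=\gamma_k$,
\[
\varphi(\vy_{k+1}) \le F\big(\vf(\vy_k)+\gamma_k\nabla\vf(\vy_k)(\vx_{k+1}-\vy_k),\vy_{k+1}\big)+\tfrac{\gamma_k^2}{2}\mathcal{S}.
\]
Joint convexity of $F$ splits the right-hand side as $(1-\gamma_k)\varphi(\vy_k)+\gamma_k F(\vf(\vy_k)+\nabla\vf(\vy_k)(\vx_{k+1}-\vy_k),\vx_{k+1})$. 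This is the form I expect Lemma~\ref{lem:glmo_main} to package. Using $L_F$-Lipschitzness in $\vu$ twice, together with the optimality of $\vx_{k+1}$ for the surrogate $\tilde l_k$, replaces that last term by $\min_{\vx}F(\vf(\vy_k)+\nabla\vf(\vy_k)(\vx-\vy_k),\vx)$. The price is the additive error $2L_F(\|\vz_k-\vf(\vy_k)\|+D_{\cX}\|\mV_k-\nabla\vf(\vy_k)\|)=:2L_F e_k$. Since $\min_\vx F(\vf(\vy_k)+\nabla\vf(\vy_k)(\vx-\vy_k),\vx)=\varphi(\vy_k)-\hat\Delta_k$, we obtain
\[
\varphi(\vy_{k+1})\le\varphi(\vy_k)-\gamma_k\hat\Delta_k+2\gamma_kL_Fe_k+\tfrac{\gamma_k^2}{2}\mathcal{S}.
\]
In the convex case each $f_i$ is convex and $F$ is monotone, so $\vf(\vy_k)+\nabla\vf(\vy_k)(\vx^*-\vy_k)\le \vf(\vx^*)$ componentwise. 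This gives $\hat\Delta_k\ge\Phi_k$, which is also the first inequality in the statement. Hence $\Phi_{k+1}\le(1-\gamma_k)\Phi_k+2\gamma_kL_Fe_k+\gamma_k^2\mathcal{S}/2$.

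\textbf{Tracking errors.} By Jensen, $\E e_k\le(\E e_k^r)^{1/r}$. The tracker lemmas give recursions of the form
\[
\E\|\mV_k-\nabla\vf(\vy_k)\|^r\le(1-\beta_k)\E\|\mV_{k-1}-\nabla\vf(\vy_{k-1})\|^r+C\beta_k^r\sigma_g^r+C\beta_k^{1-r}(L D_{\cX}\gamma_{k-1})^r .
\]
The drift term is $(LD\gamma)^r$ for the Jacobian, $(GD\gamma)^r$ for Variant I, and a quadratic remainder plus Jacobian error for Variant II. The $\beta^r$ noise factor uses the von Bahr--Esseen-type inequality for martingale differences with $r\in(1,2]$. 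With $\beta_k\asymp(k+k_0)^{-r/(2r-1)}$ and $\gamma_k\asymp k^{-1}$, I would prove by induction that
\[
\E\|\cdot\|^r\le B(k+k_0)^{-r(r-1)/(2r-1)} .
\]
The key check is that the noise and drift terms balance: $\beta^{r-1}\asymp\gamma^r\beta^{-r}$ gives $\beta\asymp\gamma^{r/(2r-1)}$. The choice of $c_0,k_0$ is what makes the factor $(1-\beta_k)$ beat the ratio $((k+k_0-1)/(k+k_0))^{r(r-1)/(2r-1)}$ in the induction step. Thus $\E e_k\lesssim(k+k_0)^{-(r-1)/(2r-1)}$.

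\textbf{Unrolling.} Multiplying by $(k+1)(k+2)$ with $\gamma_k=2/(k+2)$, the recursion becomes
\[
(k+1)(k+2)\E\Phi_{k+1}\le k(k+1)\E\Phi_k+4(k+1)L_F\E e_k+2\mathcal{S}.
\]
Summing gives $\E\Phi_K\lesssim \mathcal{S}/K+K^{-2}\sum_k k^{1-(r-1)/(2r-1)}\asymp K^{-(r-1)/(2r-1)}$. To state the bound for $\E\hat\Delta_K$ itself I would do the same with a final step, or note that $\hat\Delta_K\le\Phi_K$-type control needs one more iteration of the descent inequality; I expect the paper's $A_{\mathrm{cvx}}$ to absorb this.

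\textbf{Main obstacle.} The hardest part is the induction for the tracker moment bounds with time-varying $\beta_k$, particularly for Variant II. There the function-tracker error couples to the Jacobian error through the term $\mV_k(\vy_k-\vy_{k-1})$. I would first bound the Jacobian tracker alone, then feed its bound into the $\vz$ recursion as an extra drift of order $\gamma^r\times$(Jacobian error), which is of lower order.
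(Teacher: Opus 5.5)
\textbf{Primal gap.} This part is correct and is essentially the paper's proof. You use the same one-step GLMO bound, and your explicit appeal to joint convexity from Assumption~\ref{assump:convex} is what justifies the convex-combination split. You also use the same lemma $\hat\Delta_k\ge\Phi_k$, and the same $r$-th moment recursion for the trackers (drift weighted by $\beta_k^{1-r}$, noise by $\beta_k^r$), closed by induction with $k_0$ large and followed by Jensen. The only difference is cosmetic. You solve the primal recursion by multiplying by $(k+1)(k+2)$ and telescoping, where the paper inducts on $h_k\le A_{\mathrm{cvx}}(k+2)^{-p}$ with $p=(r-1)/(2r-1)$. Since $\gamma_0=1$, your route even removes $\Phi_0$ from the constant.

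\textbf{The gap bound.} The genuine gap is the second claim, $\E[\hat\Delta_K]=\mathcal{O}((K+2)^{-p})$. One more descent step does not deliver it. Rearranging Lemma~\ref{lem:glmo_main} at $k=K$ and dropping $h_{K+1}\ge 0$ gives only $\E\hat\Delta_K\le h_K/\gamma_K+\gamma_K\mathcal{S}/2+2L_F\E e_K$, and $h_K/\gamma_K\asymp K^{1-p}$ does not decay. Nor will $A_{\mathrm{cvx}}$ absorb anything. The paper's last line infers $\E[\hat\Delta_K]\le A_{\mathrm{cvx}}(K+2)^{-p}$ from $\hat\Delta_K\ge h_K$, which runs in the wrong direction. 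So the paper's argument also proves only $h_K\le A_{\mathrm{cvx}}(K+2)^{-p}$ together with $h_K\le\E\hat\Delta_K$.

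\textbf{Two valid repairs.}

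\emph{Last iterate.} Apply the definition of $\mathcal{S}$ and joint convexity at the exact GLMO point $\bar\vx$ of $\vy$. This gives $\varphi(\vx^*)\le\varphi(\vy)-\gamma\hat\Delta(\vy)+\tfrac{\gamma^2}{2}\mathcal{S}$ for every $\gamma\in(0,1]$. Optimizing over $\gamma$ yields $\hat\Delta(\vy)\le\sqrt{2\mathcal{S}h(\vy)}+2h(\vy)$, hence only $\E\hat\Delta_K=\mathcal{O}(K^{-p/2})$ after Jensen.

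\emph{Best iterate.} To keep the rate $K^{-p}$, sum the descent inequality over $\lceil K/2\rceil\le k\le K$. On that range:
\begin{itemize}
\item $\sum_k\gamma_k\ge 1/2$;
\item $\sum_k\gamma_k^2=\mathcal{O}(1/K)$;
\item $\sum_k\gamma_k\E e_k=\mathcal{O}(K^{-p})$;
\item $h_{\lceil K/2\rceil}=\mathcal{O}(K^{-p})$ by your primal bound.
\end{itemize}
Together these give $\min_{\lceil K/2\rceil\le k\le K}\E\hat\Delta_k=\mathcal{O}(K^{-p})$.

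\textbf{What this means for the statement.} The gap part should be stated as a best-iterate bound, or with the weaker last-iterate rate, rather than as the last-iterate $\mathcal{O}(K^{-p})$ bound written in the theorem.
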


At $r = 2$, this yields $\mathcal{O}(K^{-1/3})$, matching the optimal rate for convex stochastic FW with momentum \citep{mokhtari2020stochastic}, now extended to the non-smooth fully composite setting.

\subsection{Deterministic Interpolation and Nonconvex Acceleration}

A natural sanity check is that the algorithm reduces to the deterministic Frank--Wolfe of \citet{vladarean2023first} when the oracle is exact. The following corollary makes this precise.

\begin{corollary}[Deterministic interpolation] \label{cor:deterministic}
Suppose the stochastic oracle is exact ($\sigma_f = \sigma_g = 0$ in Assumption~\ref{assump:noise}). Setting $\beta_k = \rho_k = 1$ for all $k$ in Algorithm~\ref{alg:hybrid_sfw} makes the trackers exact: $\mV_k = \nabla \vf(\vy_k)$ and $\vz_k = \vf(\vy_k)$ for all $k \ge 1$. The algorithm then reduces to the Basic Method of \citet{vladarean2023first}. With the deterministic step-size schedules, the iterates satisfy:
\begin{itemize}
\item \textbf{Convex case} (Assumption~\ref{assump:convex}, $\gamma_k = 2/(k+2)$):
\[
\varphi(\vy_K) - \varphi(\vx^*) \le \frac{2 \mathcal{S}}{K+1}.
\]
\item \textbf{Non-convex case} ($\gamma_k = 1/\sqrt{k+1}$):
\[
\min_{0 \le k \le K} \hat\Delta_k \le \frac{\Phi_0 + \tfrac{1}{2}\mathcal{S}(1 + \ln(K+1))}{\sqrt{K+1}}.
\]
\end{itemize}
These match Theorems 3.1 and 3.2 of \citet{vladarean2023first}.
\end{corollary}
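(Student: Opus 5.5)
First I will verify that the trackers become exact. Take $\sigma_f=\sigma_g=0$; unbiasedness together with the $r$-th moment bound then forces $\tilde\vf(\vx;\xi)=\vf(\vx)$ and $\nabla\tilde\vf(\vx;\xi)=\nabla\vf(\vx)$ almost surely. With $\beta_k=\rho_k=1$ the Jacobian update reads $\mV_k=\nabla\tilde\vf(\vy_k;\xi_k)=\nabla\vf(\vy_k)$. In both variants the factor $(1-\rho_k)$ vanishes, so Variant II's Taylor term drops out and $\vz_k=\tilde\vf(\vy_k;\xi_k)=\vf(\vy_k)$. The GLMO step therefore becomes $\vx_{k+1}\in\argmin_{\vx\in\cX}F(\vf(\vy_k)+\nabla\vf(\vy_k)(\vx-\vy_k),\vx)$, and this is exactly the Basic Method of \citet{vladarean2023first}. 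This also gives $F(\tilde l_k(\vx_{k+1}),\vx_{k+1})=\varphi(\vy_k)-\hat\Delta(\vy_k)$.

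Next I will derive the deterministic one-step descent from the definition of $\mathcal{S}$ in \eqref{eq:curvature_def}. Apply it with $\vx=\vy_k$, $\vy=\vx_{k+1}$ and $\gamma=\gamma_k$, so that $\vy_\gamma=\vy_{k+1}$. This gives
\[
\varphi(\vy_{k+1})\le F\big(\vf(\vy_k)+\gamma_k\nabla\vf(\vy_k)(\vx_{k+1}-\vy_k),\,\vy_{k+1}\big)+\tfrac{\gamma_k^2}{2}\mathcal{S}.
\]
The main obstacle is to bound this first term by $(1-\gamma_k)\varphi(\vy_k)+\gamma_k F(\tilde l_k(\vx_{k+1}),\vx_{k+1})$. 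The inner argument equals $(1-\gamma_k)\vf(\vy_k)+\gamma_k\tilde l_k(\vx_{k+1})$ and the outer argument is $(1-\gamma_k)\vy_k+\gamma_k\vx_{k+1}$, so joint convexity of $F$ would give the bound immediately. In the non-convex case I cannot assume joint convexity, so I plan to rely on the same structural step that Theorem 3.2 of \citet{vladarean2023first} uses (their convexity of $F$ jointly, standard in that literature). If the paper's own Lemma~\ref{lem:glmo_main} already provides the one-step bound with the tracking errors set to zero, I will simply invoke it with zero error terms. Either route yields
\[
\varphi(\vy_{k+1})\le\varphi(\vy_k)-\gamma_k\hat\Delta_k+\tfrac{\gamma_k^2}{2}\mathcal{S}.
\]

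For the convex case, convexity of $\varphi$ (Assumption~\ref{assump:convex} with monotonicity) gives $\hat\Delta_k\ge\varphi(\vy_k)-\varphi(\vx^*)$, because the linearization lies below $\vf$ and $F$ is monotone. Writing $h_k$ for the primal gap, the recursion becomes $h_{k+1}\le(1-\gamma_k)h_k+\gamma_k^2\mathcal{S}/2$ with $\gamma_k=2/(k+2)$. Since $\gamma_0=1$, this gives $h_1\le\mathcal{S}/2$. A standard induction then shows $h_k\le 2\mathcal{S}/(k+1)$ for $k\ge1$.

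For the non-convex case, I will telescope the recursion over $k=0,\dots,K$ with $\gamma_k=1/\sqrt{k+1}$ and use $\varphi(\vy_{K+1})\ge\min\varphi$. This gives
\[
\Big(\min_{k}\hat\Delta_k\Big)\sum_{k}\gamma_k\le\Phi_0+\tfrac{\mathcal{S}}{2}\sum_k\tfrac{1}{k+1}.
\]
The bounds $\sum_k\gamma_k\ge\sqrt{K+1}$ and $\sum_k 1/(k+1)\le 1+\ln(K+1)$ then give the stated rate, which matches Theorems 3.1 and 3.2 of \citet{vladarean2023first}.
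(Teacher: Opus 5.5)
Your proposal is correct and follows the paper's route. Once $\sigma_f=\sigma_g=0$ and $\beta_k=\rho_k=1$ make the trackers exact, Lemma~\ref{lem:glmo_main} with zero tracking errors gives the deterministic descent inequality; the only difference is that you carry out the $\gamma_k=2/(k+2)$ induction and the $\gamma_k=1/\sqrt{k+1}$ telescoping yourself instead of citing Theorems~3.1--3.2 of \citet{vladarean2023first}. Your caution about the one-step bound is justified: Step~2 in the proof of Lemma~\ref{lem:glmo_main} actually needs joint convexity of $F$, not just subhomogeneity and monotonicity (e.g.\ $F(u)=\min(u,0)$ satisfies Assumption~\ref{assump:outer} but breaks that step). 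Assumption~\ref{assump:convex} supplies joint convexity in the convex case and \citet{vladarean2023first} assume it throughout, so stating it explicitly for the non-convex case is the right fix.
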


The corollary is immediate from the form of the algorithm and Lemma~\ref{lem:glmo_main}: with exact trackers, the tracking-error contributions vanish and Lemma~\ref{lem:glmo_main} reduces to the deterministic progress bound. See Appendix~\ref{sec:proof_deterministic} for details.

\paragraph{Schedule choice across regimes.}
The schedules in Theorems~\ref{thm:nonconvex} and~\ref{thm:convex} are noise-aware: they shrink the momentum parameters $\beta_k, \rho_k$ as $K$ grows in order to absorb the variance of the stochastic estimates. When $\sigma_f = \sigma_g = 0$, no shrinking is needed: setting $\beta_k = \rho_k = 1$ recovers the deterministic regime, where Vladarean's $\gamma_k$-only schedule is optimal. The algorithm thus interpolates smoothly: the same code, with three momentum parameters, recovers either the stochastic or the deterministic algorithm depending on how those parameters are set.

\paragraph{Acceleration under stronger smoothness.}
The rates in Theorems~\ref{thm:nonconvex} and~\ref{thm:convex} are tight under expected smoothness alone (Assumption~\ref{assump:inner_smooth}). When the stochastic Jacobian satisfies a stronger \emph{$r$-average smoothness} property (Assumption~\ref{assump:avg_smooth} in Appendix~\ref{sec:storm_extension}), our framework integrates seamlessly with the STORM variance-reduction technique \citep{cutkosky2019momentum, hassani2020stochastic}. Specifically, replacing the Polyak Jacobian update by the STORM finite-difference correction (\eqref{eq:storm_grad} in the appendix), with the same Taylor-corrected function tracker, accelerates the rate to $\mathcal{O}(K^{-(r-1)/(2r-1)})$, recovering $\mathcal{O}(K^{-1/3})$ at $r = 2$ (Theorem~\ref{thm:storm_rate}). An alternative second-order acceleration via randomized Hessian--vector products is given in Appendix~\ref{sec:hessian_extension}; we additionally prove there (Proposition~\ref{prop:hessian_implies_avg}) that bounded-Hessian-noise moments imply $r$-average smoothness, showing that all such accelerations are tied to the same underlying structural assumption.

\subsection{Summary of Results}

Table~\ref{tab:comparison} situates our results within the existing literature on stochastic compositional optimization. Our work is the first to handle a non-smooth outer function $F$ in the stochastic projection-free regime. Among prior work, only the deterministic Basic Method of \citet{vladarean2023first} permits a non-smooth $F$, and only \citet{xiao2022projection} operates projection-free in the stochastic case (though it strictly requires $F$ to be smooth).

\begin{table}[h]
\centering
\small
\caption{Comparison of stochastic compositional optimization methods. Rates are stated for finite-variance noise ($r = 2$); extensions to heavy-tailed noise with $r \in (1, 2]$ are given in the cited references where applicable. ``PF'' denotes projection-free; ``Smooth'' refers to smoothness of the outer function $F$; ``Inner'' denotes the regularity assumption on the stochastic Jacobian (Exp.\ = expected smoothness, Avg.\ = $r$-average smoothness).}
\label{tab:comparison}
\begin{tabular}{@{}lccccll@{}}
\toprule
Method & Stoch. & PF & Smooth $F$ & Inner & Setting & Rate \\
\midrule
SCGD \citep{wang2017stochastic}        & \checkmark & --        & required & Exp. & nonconvex & $\mathcal{O}(K^{-1/4})$ \\
NASA \citep{ghadimi2020single}         & \checkmark & --        & required & Exp. & nonconvex & $\mathcal{O}(K^{-1/4})$ \\
SCSC \citep{chen2020solving}           & \checkmark & --        & required & Exp. & nonconvex & $\mathcal{O}(K^{-1/4})$ \\
L-NASA \citep{balasubramanian2022stochastic} & \checkmark & --  & required & Exp. & nonconvex & $\mathcal{O}(K^{-1/4})$ \\
LiNASA+ICG \citep{xiao2022projection}  & \checkmark & \checkmark & required & Exp. & nonconvex & $\mathcal{O}(K^{-1/4})$ \\
Vladarean et al.\ \citep{vladarean2023first} & --      & \checkmark & \emph{not required} & --   & nonconvex & $\tilde{\mathcal{O}}(K^{-1/2})$ \\
Vladarean et al.\ \citep{vladarean2023first} & --      & \checkmark & \emph{not required} & --   & convex    & $\mathcal{O}(K^{-1})$ \\
\midrule
\textbf{This work} (Thm.~\ref{thm:nonconvex})  & \checkmark & \checkmark & \emph{not required} & Exp. & nonconvex & $\mathcal{O}(K^{-1/4})$ \\
\textbf{This work} (Thm.~\ref{thm:convex})     & \checkmark & \checkmark & \emph{not required} & Exp. & convex    & $\mathcal{O}(K^{-1/3})$ \\
\textbf{This work} (Thm.~\ref{thm:storm_rate})& \checkmark & \checkmark & \emph{not required} & Avg. & nonconvex & $\mathcal{O}(K^{-1/3})$ \\
\bottomrule
\end{tabular}
\end{table}

Four observations stand out. First, our method is the only stochastic projection-free algorithm that does not require $F$ to be smooth, addressing the open problem stated in \citet{vladarean2023first}. Second, the $\mathcal{O}(K^{-1/4})$ rate of Theorem~\ref{thm:nonconvex} matches the established state-of-the-art for projection-based stochastic compositional methods such as SCSC and L-NASA (which report $\mathcal{O}(K^{-1/2})$ bounds evaluated with respect to the \emph{squared} gap). Furthermore, this $\mathcal{O}(K^{-1/4})$ rate is minimax-optimal for projection-free single-sample stochastic methods under expected smoothness, as established by the lower bounds of \citet{arjevani2023lower}. Third, regarding oracle complexity, our framework requires only a single Generalized Linear Minimization Oracle (GLMO) call per iteration. In contrast, the only other stochastic projection-free method, LiNASA+ICG \citep{xiao2022projection}, relies on an inner Inverse Conditional Gradient (ICG) procedure that demands $\mathcal{O}(K^{3/2})$ standard LMO calls to achieve its $\mathcal{O}(K^{-1/4})$ rate, making our approach significantly more computationally efficient. Fourth, when the stronger $r$-average smoothness assumption is available, Theorem~\ref{thm:storm_rate} accelerates this rate to $\mathcal{O}(K^{-1/3})$ via STORM-style variance reduction. This recovers the accelerated rate of \citet{yurtsever2019conditional} for standard stochastic Frank--Wolfe, but successfully extends it to the much broader class of problems featuring non-smooth compositional objectives.

\section{Numerical Experiments} \label{sec:experiments}

We evaluate the algorithm on three problems featuring non-smooth $F$ and non-trivial constraint sets: robust minimax sparse regression, CVaR portfolio optimization, and robust low-rank matrix completion.

\textbf{Algorithms.}
We benchmark four algorithms, each using the same GLMO. (i) \textbf{Vanilla SCFW}: stochastic compositional FW with no variance reduction. (ii) \textbf{Clipped SCFW}: Vanilla SCFW with the stochastic Jacobian clipped at norm $C$ (tuned per task). (iii) \textbf{Variant I} (this work, Polyak tracker). (iv) \textbf{Variant II} (this work, Taylor tracker). For Variants I and II, we use the theoretically-prescribed schedule at $r = 2$: $\gamma = K^{-3/4}$, $\beta = \rho = K^{-1/2}$. We evaluate progress via the generalized FW gap $\hat\Delta_k$, plotted on log-log scale.

\paragraph{Tasks.}
We consider the three following tasks:
\begin{enumerate}[label=\textbf{Task \arabic*:}, leftmargin=*,noitemsep,topsep=0pt]
    \item \textbf{Robust minimax sparse regression.} Given $m$ data subsets $\{(\va_{i,j}, b_{i,j})\}_{j=1}^{N_i}$, $i = 1, \ldots, m$, we solve
    \begin{equation}
        \min_{\|\vx\|_1 \le \tau} \max_{1 \le i \le m} \tfrac{1}{2 N_i} \sum_{j=1}^{N_i} (\va_{i,j}^\top \vx - b_{i,j})^2,
    \end{equation}
    where the outer function $F(\vu) = \max_i u^{(i)}$ is non-smooth. The constraint $\cX = \{\vx : \|\vx\|_1 \le \tau\}$ admits an $\mathcal{O}(d)$ LMO, and the GLMO is a small linear program in $\vx$ and an auxiliary scalar.

    \item \textbf{Risk-averse portfolio (CVaR).} Given asset return vectors $\vr_t \in \R^d$, we solve
    \begin{equation}
        \min_{[\vx, y_0] \in \Delta_d \times [-1, 1]} \Big\{ y_0 + \frac{1}{1 - \alpha} \E_{\vr}\big[\max(0, -\vr^\top \vx - y_0)\big] \Big\},
    \end{equation}
    where $\Delta_d$ is the probability simplex and $\alpha = 0.95$. The outer function involves the non-smooth hinge operator. The GLMO is an LP.

    \item \textbf{Robust matrix completion.} Given partial observations $\Omega \subset [d] \times [m]$ of a target matrix $\mM$, we solve
    \begin{equation}
        \min_{\|\mX\|_* \le \tau} \tfrac{1}{|\Omega|} \sum_{(i,j) \in \Omega} |X_{ij} - M_{ij}|,
    \end{equation}
    where the $\ell_1$ loss $F(\vu) = \|\vu\|_1$ is non-smooth and the nuclear-norm ball admits an LMO via top-singular-vector computation.
\end{enumerate}

\textbf{Synthetic-data results.}
We first verify the convergence behavior on synthetic instances where ground-truth quantities are known and the noise is controlled. Task~1 uses $m = 10$ data subsets with $d = 100$, $\tau = 5$, and Gaussian inputs with task-dependent noise. Task~2 uses $d = 50$ assets with synthetically generated heavy-tailed daily returns. Task~3 uses a $30 \times 20$ matrix with rank $5$ and $30\%$ observation density, with Laplacian noise. Results, averaged over $10$ random seeds, are shown in Figure~\ref{fig:synth}.

\begin{figure}[h]
\centering
\includegraphics[width=\textwidth]{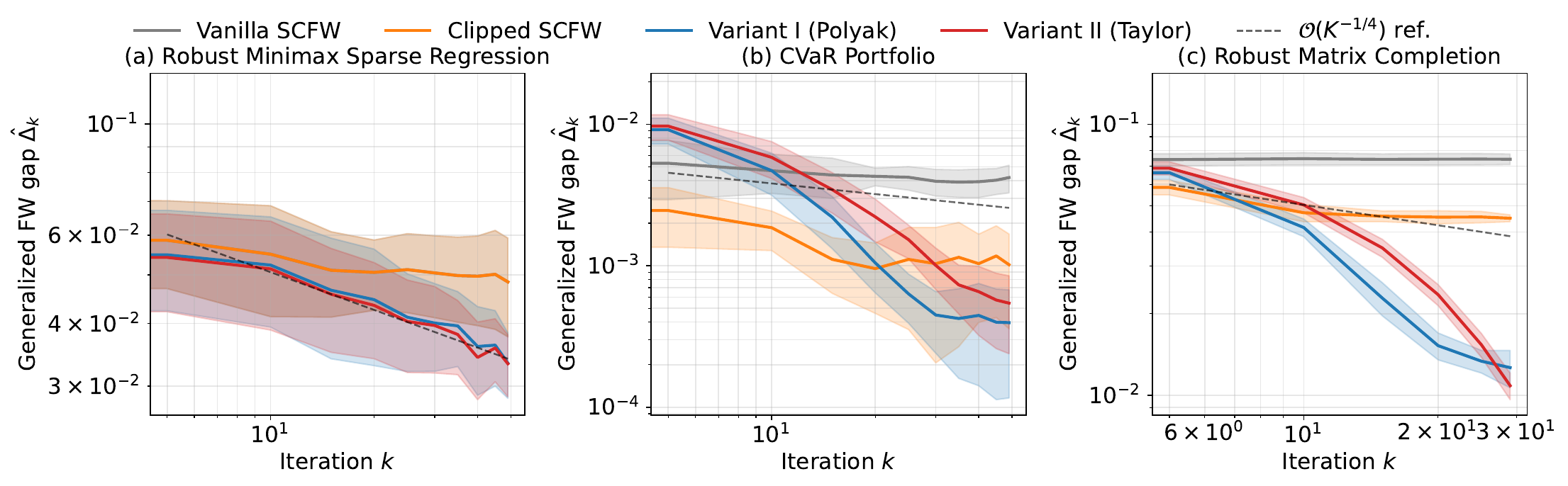}
\caption{Convergence of the generalized Frank--Wolfe gap $\hat\Delta_k$ on the three synthetic tasks. Both proposed variants substantially outperform Vanilla SCFW and Clipped SCFW. The dashed line indicates the theoretical $\mathcal{O}(K^{-1/4})$ rate.}
\label{fig:synth}
\end{figure}

\textbf{Real-data experiments.}
Task~1 uses the \texttt{a9a} LIBSVM dataset \citep{chang2011libsvm}, partitioned into $m = 10$ disjoint subsets stratified by label, inducing significant task heterogeneity; we set $\tau = 10$. Task~2 uses daily closing-price returns of S\&P~500 constituents over 2014--2023 \citep{wrds_compustat}, restricted to the $d = 100$ most liquid assets continuously listed throughout the period; real returns are well known to exhibit heavy tails \citep{cont2001empirical}. Task~3 uses the \texttt{MovieLens-100K} rating matrix \citep{harper2015movielens}, where the user-rating distribution is heavy-tailed. Each experiment is averaged over $5$ random seeds. Results are shown in Figure~\ref{fig:real}.

\begin{figure}[h]
\centering
\includegraphics[width=\textwidth]{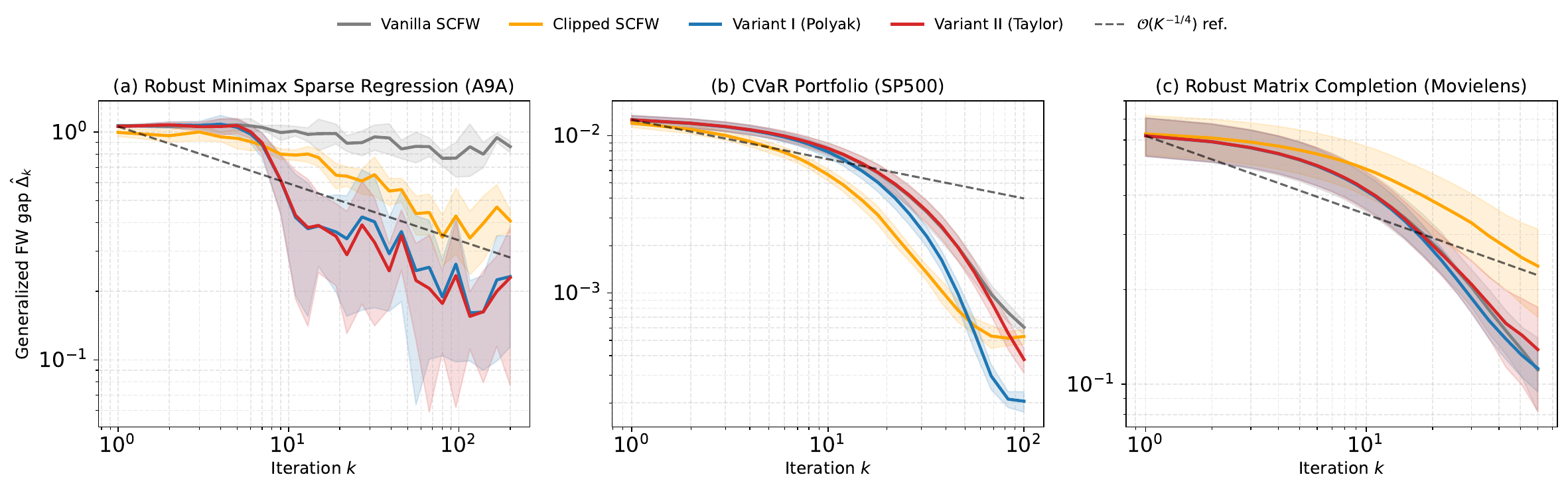}
\caption{Convergence on real-world datasets (\texttt{a9a}, S\&P~500, \texttt{MovieLens-100K}). The qualitative behavior tracks the synthetic study: Vanilla SCFW does not converge, Clipped SCFW reaches a tunable plateau, and both variants achieve the predicted $\mathcal{O}(K^{-1/4})$ rate.}
\label{fig:real}
\end{figure}

\textbf{Discussion.}
Across both synthetic (Figure~\ref{fig:synth}) and real-world (Figure~\ref{fig:real}) experiments, three observations are consistent. First, Vanilla SCFW fails to converge: the gap stagnates because plug-in stochastic estimates are biased through the non-linear $F$. Second, Clipped SCFW partially stabilizes the trajectory but plateaus well above the rate of the momentum-based variants and is sensitive to the clipping threshold. Third, both Variant I (Polyak) and Variant II (Taylor) achieve the predicted $\mathcal{O}(K^{-1/4})$ rate.

On Task~1, the two variants perform essentially identically; on Tasks~2 and~3, Variant I slightly outperforms Variant II in the regime explored. Both tasks satisfy Assumption~\ref{assump:inner_lipschitz}, so Variant I is applicable in either case. The schedules used here are the theoretically-prescribed ones; we did not perform per-task tuning of $\rho$, which may close the gap. Variant II remains the theoretically robust choice when Assumption~\ref{assump:inner_lipschitz} cannot be verified.

\section{Conclusion}
In this work, we introduced the first stochastic projection-free algorithm for fully composite optimization capable of handling non-smooth outer functions. Our approach achieves the minimax optimal $\mathcal{O}(K^{-1/4})$ rate for non-convex objectives and an $\mathcal{O}(K^{-1/3})$ rate for convex objectives under standard finite-variance noise assumptions. Furthermore, the algorithm seamlessly adapts to the noise level, yielding a clean reduction to optimal deterministic rates when the stochastic noise vanishes. 

\textbf{Limitations and Future Work.}
While our theoretical and empirical results demonstrate the efficacy of the proposed framework, several open challenges remain. First, our analysis assumes that the non-smooth outer function $F$ is deterministic; extending our framework to the \textit{doubly stochastic} regime—where $F$ is itself an expectation—remains an important hurdle. Second, our current convergence guarantees hold in expectation. Deriving high-probability bounds, particularly in settings corrupted by heavy-tailed noise, is a crucial next step for robust deployment. Finally, a highly compelling direction for future research is extending our framework to distributed and federated learning settings, where projection-free methods for non-smooth compositional objectives could significantly reduce communication bottlenecks.

\bibliographystyle{plainnat}
\bibliography{references}

\newpage
\appendix

\section{Auxiliary Inequalities} \label{sec:auxiliary}

\begin{lemma}[Norm power convexity] \label{lem:norm_convex}
For vectors $\va_1, \ldots, \va_m$ and $s \ge 1$: $\|\sum_i \va_i\|^s \le m^{s-1} \sum_i \|\va_i\|^s$.
\end{lemma}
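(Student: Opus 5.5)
The plan is to combine the triangle inequality with convexity of the scalar power map $t \mapsto t^s$ on $[0,\infty)$, which holds because $s \ge 1$. First I would reduce to a scalar statement. By the triangle inequality, $\|\sum_{i=1}^m \va_i\| \le \sum_{i=1}^m \|\va_i\|$. Since $t \mapsto t^s$ is non-decreasing on $[0,\infty)$, it follows that
\[
\Big\|\sum_{i=1}^m \va_i\Big\|^s \le \Big(\sum_{i=1}^m \|\va_i\|\Big)^s .
\]

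It then suffices to prove that for non-negative reals $t_1, \ldots, t_m$,
\[
\Big(\sum_{i=1}^m t_i\Big)^s \le m^{s-1}\sum_{i=1}^m t_i^s .
\]
I would write $\sum_i t_i = m \cdot \frac{1}{m}\sum_i t_i$ and apply Jensen's inequality to the convex function $\psi(t) = t^s$ with uniform weights $1/m$. This gives $\big(\frac{1}{m}\sum_i t_i\big)^s \le \frac{1}{m}\sum_i t_i^s$. Multiplying both sides by $m^s$ yields exactly the factor $m^{s-1}$.

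As an alternative route for the scalar step, Hölder's inequality with exponents $s$ and $s/(s-1)$ applied to $\sum_i 1\cdot t_i$ gives the same bound. When $s = 1$, the factor $m^{s-1}$ equals $1$ and the statement is just the triangle inequality, so no separate treatment is needed.

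There is no real obstacle here. The only points to check are the monotonicity of $t^s$, used to pass the triangle inequality through the power, and that the Jensen weights produce precisely $m^{s-1}$ and not $m^{s}$.
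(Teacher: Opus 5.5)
Your proof is correct: the triangle inequality, monotonicity of $t \mapsto t^s$ on $[0,\infty)$, and Jensen's inequality with uniform weights $1/m$ give exactly the factor $m^{s-1}$, and your separate handling of $s=1$ in the H\"older alternative is right. The paper states this lemma without proof as a standard auxiliary fact, and its name (``norm power convexity'') points to the same convexity/Jensen argument, so your route is essentially the intended one.
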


\begin{lemma}[Subadditivity of the $r$-th root] \label{lem:subadd}
For $a_i \ge 0$ and $s \ge 1$: $(\sum_i a_i)^{1/s} \le \sum_i a_i^{1/s}$.
\end{lemma}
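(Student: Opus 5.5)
The statement to prove is Lemma~\ref{lem:subadd}: for $a_i \ge 0$ and $s \ge 1$, $(\sum_i a_i)^{1/s} \le \sum_i a_i^{1/s}$. The plan is to raise both sides to the $s$-th power, reducing the claim to $\sum_i a_i \le (\sum_i b_i)^s$ with $b_i := a_i^{1/s} \ge 0$. This is valid because $t \mapsto t^{1/s}$ is increasing on $[0,\infty)$, so the two inequalities are equivalent.

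If $\sum_i b_i = 0$, then every $a_i = 0$ and the claim is trivial. Otherwise set $B := \sum_i b_i > 0$ and normalize with $w_i := b_i / B \in [0,1]$, so that $\sum_i w_i = 1$. Since $s \ge 1$ and $w_i \in [0,1]$, we have $w_i^s \le w_i$. Summing gives $\sum_i w_i^s \le 1$. Multiplying through by $B^s$ then yields $\sum_i b_i^s \le B^s$, which is exactly $\sum_i a_i \le (\sum_i a_i^{1/s})^s$. Taking $s$-th roots finishes the proof.

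An alternative is induction on the number of terms using the two-term inequality $(x+y)^{1/s} \le x^{1/s} + y^{1/s}$. That inequality follows from concavity of $t^{1/s}$ together with its value $0$ at $t=0$, which makes the function subadditive. The normalization argument is cleaner, so I would use it.

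There is no real obstacle. The only points needing care are the degenerate case where all $a_i$ vanish and the monotonicity used to pass between the $s$-th power and the $s$-th root.
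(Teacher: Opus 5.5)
Your proof is correct. With $b_i = a_i^{1/s}$ and $B = \sum_i b_i > 0$, the pointwise bound $w_i^s \le w_i$ for $w_i = b_i/B \in [0,1]$ and $s \ge 1$ sums to $\sum_i b_i^s \le B^s$, which is the claim after taking $s$-th roots; you also handle the degenerate case $B = 0$ correctly. The paper gives no proof of this lemma: it simply lists it among the standard facts in Appendix~\ref{sec:auxiliary} and uses it, together with Jensen, to turn the $r$-th-moment tracker bounds into first-moment bounds. Your normalization argument, or the concavity-plus-induction route you mention, therefore just fills in that omitted standard step.
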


\begin{lemma}[Jensen for weighted sums] \label{lem:jensen}
If $w_i \ge 0$ with $\sum_i w_i \le 1$, then for any $a_i \ge 0$ and $s \ge 1$: $(\sum_i w_i a_i)^s \le \sum_i w_i a_i^s$.
\end{lemma}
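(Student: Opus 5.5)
The statement to prove is Lemma~\ref{lem:jensen}: if $w_i\ge 0$ with $\sum_i w_i \le 1$, then $(\sum_i w_i a_i)^s \le \sum_i w_i a_i^s$ for $a_i\ge 0$ and $s\ge 1$. The plan is to reduce to the classical Jensen inequality for the convex function $t\mapsto t^s$ on $[0,\infty)$, and to absorb the possible deficit $1-\sum_i w_i$ by adding a dummy point at the origin.

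First, set $W := \sum_i w_i \in [0,1]$. If $W=0$, every $w_i$ vanishes and both sides are $0$, so there is nothing to prove. Otherwise, add an auxiliary index $0$ with weight $w_0 := 1-W \ge 0$ and value $a_0 := 0$. The weights $w_0, w_1, \ldots$ are then nonnegative and sum to exactly one.

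Next, apply Jensen's inequality to $\phi(t)=t^s$. This function is convex on $[0,\infty)$ because $s\ge1$; its second derivative is $s(s-1)t^{s-2}\ge 0$ for $t>0$, and $\phi$ is continuous at $0$. We get
\[
\Big(\sum_{i\ge 1} w_i a_i\Big)^s = \Big(w_0 a_0 + \sum_{i\ge1} w_i a_i\Big)^s \le w_0 a_0^s + \sum_{i\ge 1} w_i a_i^s = \sum_{i\ge 1} w_i a_i^s,
\]
where the last equality uses $a_0^s = 0$. This is exactly the claim.

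There is no real obstacle. The only point needing care is the deficit $1-W$, and the zero-valued dummy point handles it because $\phi(0)=0$. Equivalently, one could write $(\sum_i w_i a_i)^s = W^s(\sum_i \tfrac{w_i}{W} a_i)^s \le W^{s}\sum_i \tfrac{w_i}{W}a_i^s = W^{s-1}\sum_i w_i a_i^s$ and then use $W^{s-1}\le 1$. I would present the dummy-point version as the main argument and mention the normalized version as a one-line check.
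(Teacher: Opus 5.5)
Your proof is correct: adding a zero-valued point with weight $1-\sum_i w_i$ (or, equivalently, normalizing and using $W^{s-1}\le 1$) reduces the claim to the ordinary finite Jensen inequality for the convex map $t\mapsto t^s$ on $[0,\infty)$. The paper lists this lemma among its auxiliary inequalities without a proof, treating it as this standard consequence of Jensen, so your argument supplies the intended justification.
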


\begin{lemma}[von Bahr--Esseen] \label{lem:vbe}
Let $\{\vX_i\}_{i=1}^k$ be a martingale difference sequence in $\R^n$. For $r \in (1, 2]$, there is a constant $C_r > 0$ such that $\E\|\sum_i \vX_i\|^r \le C_r \sum_i \E\|\vX_i\|^r$. At $r = 2$, $C_2 = 1$.
\end{lemma}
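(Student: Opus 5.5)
We argue by induction on $k$, using a one-step inequality. Let $\mathcal{F}_i$ be the filtration with respect to which $\{\vX_i\}$ is a martingale difference sequence, so that $\vX_i$ is $\mathcal{F}_i$-measurable and $\E[\vX_i \mid \mathcal{F}_{i-1}] = 0$. Set $\vX := \sum_{i\le k}\vX_i$ and partial sums $\vu_j := \sum_{i \le j} \vX_i$, with $\vu_0 = 0$. We may assume every $\E\|\vX_i\|^r$ is finite, since otherwise the bound is trivial. It then suffices to prove
\[
\E\|\vu_j\|^r \le \E\|\vu_{j-1}\|^r + C_r\,\E\|\vX_j\|^r .
\]
Summing this over $j = 1, \ldots, k$ gives the claim.

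\textbf{Case $r=2$.} Expand $\|\vu_{j-1} + \vX_j\|^2 = \|\vu_{j-1}\|^2 + 2\langle \vu_{j-1}, \vX_j\rangle + \|\vX_j\|^2$. Condition on $\mathcal{F}_{j-1}$. Since $\vu_{j-1}$ is $\mathcal{F}_{j-1}$-measurable and $\E[\vX_j\mid\mathcal{F}_{j-1}]=0$, the cross term has zero expectation. This gives the one-step inequality with $C_2 = 1$.

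\textbf{Case $r\in(1,2)$.} The key is a deterministic inequality on $\R^n$: for all $\va,\vb$,
\begin{equation*}
\|\va+\vb\|^r \le \|\va\|^r + \langle \nabla\psi(\va), \vb\rangle + c_r\|\vb\|^r, \qquad \psi(\vx) := \|\vx\|^r,\quad \nabla\psi(\vx) = r\|\vx\|^{r-2}\vx .
\end{equation*}
Here $\nabla\psi(0)=0$, and $c_r$ depends only on $r$. Applying it with $\va = \vu_{j-1}$ and $\vb = \vX_j$ and taking conditional expectations, the linear term again vanishes. Indeed, $\nabla\psi(\vu_{j-1})$ is $\mathcal{F}_{j-1}$-measurable, and it is integrable against $\vX_j$ by H\"older: $\|\nabla\psi(\vu_{j-1})\| = r\|\vu_{j-1}\|^{r-1}$ lies in $L^{r/(r-1)}$ and $\vX_j$ lies in $L^r$. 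This yields the one-step inequality with $C_r = c_r$.

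To prove the deterministic inequality, write the remainder as
\[
\|\va+\vb\|^r - \|\va\|^r - \langle\nabla\psi(\va),\vb\rangle = \int_0^1 \langle \nabla\psi(\va+t\vb) - \nabla\psi(\va), \vb\rangle\,dt .
\]
It then suffices to show that $\vx\mapsto \|\vx\|^{r-2}\vx$ is $(r-1)$-H\"older continuous, that is,
\[
\big\|\,\|\vx\|^{r-2}\vx - \|\vy\|^{r-2}\vy\,\big\| \le H_r\|\vx-\vy\|^{r-1}.
\]
Substituting this bound gives $c_r = rH_r\int_0^1 t^{r-1}dt = H_r$.

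\textbf{Main obstacle: the H\"older estimate.} This is the step I expect to be hardest. I would first reduce to the two-dimensional plane spanned by $\vx$ and $\vy$, and normalize so that $\|\vx\| = 1 \ge \|\vy\|$. I would then split into two cases. If $\|\vx-\vy\| \ge \tfrac12$, the left side is at most $2$ and we are done with a constant. If $\|\vx - \vy\| < \tfrac12$, both points are bounded away from the origin, where the map is smooth with bounded derivative. There the bound follows from a Lipschitz estimate together with $\|\vx-\vy\| \le \|\vx-\vy\|^{r-1}$, which holds because $\|\vx-\vy\| < 1$. This produces some finite $H_r$; the sharp value $2^{2-r}$ is known, but any finite constant suffices for the lemma. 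A fallback that avoids the H\"older map is the classical coordinate-free route: $\|\va+\vb\|^r \le \|\va\|^r + r\|\va\|^{r-2}\langle\va,\vb\rangle + 2^{2-r}\|\vb\|^r$. One can verify this by reducing to the scalar function $t\mapsto(1+2ts+t^2)^{r/2}$ with $s\in[-1,1]$, and checking the inequality by elementary calculus.
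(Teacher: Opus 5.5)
Your argument is correct. It differs from the paper only because the paper gives no proof at all: Lemma~\ref{lem:vbe} is listed in Appendix~\ref{sec:auxiliary} as a classical black box.

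You prove it from scratch. The key step is the one-step smoothness inequality $\|\va+\vb\|^r \le \|\va\|^r + r\|\va\|^{r-2}\langle\va,\vb\rangle + c_r\|\vb\|^r$, which you deduce from the $(r-1)$-H\"older continuity of $\phi(\vz):=\|\vz\|^{r-2}\vz$. You then telescope along the partial sums and kill the linear term with the martingale property; your H\"older integrability check is exactly what is needed there. You also treat $r=2$ separately, so $C_2=1$ comes out exactly; your crude constant $2^r$ would give $4$ there.

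The H\"older estimate goes through as you describe, with $H_r=2^r$. One small fix: in the near case it is the whole segment $[\vx,\vy]$, not just its endpoints, that must avoid a neighbourhood of the origin. This holds because the segment lies in the ball of radius $\tfrac12$ around $\vx$. On that ball $\|D\phi(\vz)\|_{\mathrm{op}}\le\|\vz\|^{r-2}\le 2^{2-r}$.

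Your route buys something the citation leaves implicit. The classical von Bahr--Esseen theorem is scalar, and applying it coordinatewise in $\R^n$ naively gives only $C_r = 2n^{1-r/2}$. Your argument instead gives a dimension-free constant valid in any Euclidean space. That covers the Frobenius-norm Jacobian noise in Lemma~\ref{lem:grad_tracker_main}, which lives in $\R^{n\times d}$. This dimension-independence is what the paper's constants $U_g$, $U_f^{\mathrm{I}}$, $U_f^{\mathrm{II}}$ tacitly assume. The citation is shorter but leaves the vector-valued extension unstated.
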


\section{Composite Curvature Bound} \label{sec:curvature_appendix}

\begin{proposition}[Bounded composite curvature] \label{prop:curvature}
Under Assumptions~\ref{assump:domain}--\ref{assump:inner_smooth}, the constant $\mathcal{S}$ defined in \eqref{eq:curvature_def} satisfies $\mathcal{S} \le L_F L D_{\cX}^2 \sqrt{n}$.
\end{proposition}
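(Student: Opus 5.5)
We fix $\vx, \vy \in \cX$ and $\gamma \in (0,1]$, set $\vy_\gamma = \vx + \gamma(\vy - \vx)$, and bound the bracket in \eqref{eq:curvature_def} by $\tfrac{\gamma^2}{2} L_F L D_{\cX}^2 \sqrt{n}$ uniformly. Since both terms in the bracket are evaluated at the same second argument $\vy_\gamma$, the dependence of $F$ on its second argument plays no role. We can therefore use only the Lipschitz property of $F$ in $\vu$ (Assumption~\ref{assump:outer}(i)); monotonicity and subhomogeneity are not needed here. This gives
\[
F(\vf(\vy_\gamma), \vy_\gamma) - F(\vf(\vx) + \gamma \nabla \vf(\vx)(\vy - \vx), \vy_\gamma) \le L_F \,\big\| \vf(\vy_\gamma) - \vf(\vx) - \nabla\vf(\vx)(\vy_\gamma - \vx) \big\|,
\]
using $\gamma(\vy - \vx) = \vy_\gamma - \vx$. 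This reduces the problem to bounding a first-order Taylor remainder of the vector map $\vf$.

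Next, we bound that remainder componentwise. For each $i$, the fundamental theorem of calculus along the segment $\vx + t(\vy_\gamma - \vx)$ gives
\[
f_i(\vy_\gamma) - f_i(\vx) - \nabla f_i(\vx)^\top(\vy_\gamma - \vx) = \int_0^1 \big(\nabla f_i(\vx + t(\vy_\gamma - \vx)) - \nabla f_i(\vx)\big)^\top (\vy_\gamma - \vx)\, dt .
\]
Each row of $\nabla\vf$ satisfies $\|\nabla f_i(\va) - \nabla f_i(\vb)\| \le \|\nabla\vf(\va) - \nabla \vf(\vb)\|_{\mathrm{op}} \le L\|\va - \vb\|$, because a row's norm is at most the operator norm. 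Hence each component of the remainder is at most $\int_0^1 L t \|\vy_\gamma - \vx\|^2 dt = \tfrac{L}{2}\|\vy_\gamma - \vx\|^2$ in absolute value. Summing the squares of the $n$ components then bounds the Euclidean norm of the remainder by $\sqrt{n}\,\tfrac{L}{2}\|\vy_\gamma - \vx\|^2 \le \sqrt{n}\,\tfrac{L}{2}\gamma^2 D_{\cX}^2$. Convexity of $\cX$ guarantees that the segment lies in $\cX$, where the Jacobian Lipschitz bound is assumed to hold.

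Combining the two steps, the bracket is at most $L_F \sqrt{n} \tfrac{L}{2}\gamma^2 D_{\cX}^2$. Multiplying by $2/\gamma^2$ and taking the supremum over $\vx, \vy \in \cX$ and $\gamma \in (0,1]$ yields $\mathcal{S} \le L_F L D_{\cX}^2\sqrt{n}$.

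The only delicate point is the $\sqrt{n}$ factor. It appears because we bound the remainder row by row. A direct vector-valued integral bound using the operator norm would give the sharper $\tfrac{L}{2}\|\vy_\gamma - \vx\|^2$ without $\sqrt{n}$, which would of course also imply the stated bound. I would present the componentwise route, since it reproduces the stated constant exactly, and remark that the $\sqrt{n}$ is not essential.
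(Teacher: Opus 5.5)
Your proposal is correct and follows essentially the same route as the paper: Lipschitzness of $F$ in $\vu$ at the common second argument $\vy_\gamma$, then a componentwise Taylor-remainder bound of $\tfrac{L\gamma^2}{2}\|\vy-\vx\|^2$ that produces the $\sqrt{n}$ factor, then multiplication by $2/\gamma^2$. Your added details (row norms bounded by the operator norm, convexity of $\cX$ keeping the segment inside the domain) are correct, and so is your remark that the vector-valued operator-norm integral bound removes the $\sqrt{n}$.
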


\begin{proof}
Fix $\vx, \vy \in \cX$ and $\gamma \in (0, 1]$, and set $\vy_\gamma = \vx + \gamma(\vy - \vx)$. By Assumption~\ref{assump:inner_smooth} applied componentwise, $|f_i(\vy_\gamma) - f_i(\vx) - \nabla f_i(\vx)^\top (\vy_\gamma - \vx)| \le \tfrac{L\gamma^2}{2} \|\vy - \vx\|^2$ for each $i$. Setting $\bm\Delta := \vf(\vy_\gamma) - \vf(\vx) - \nabla \vf(\vx)(\vy_\gamma - \vx)$, we get $\|\bm\Delta\| \le \tfrac{L\gamma^2 \sqrt{n}}{2} \|\vy - \vx\|^2$. By $L_F$-Lipschitzness of $F$ in $\vu$ and $\|\vy - \vx\| \le D_{\cX}$:
\[
F(\vf(\vy_\gamma), \vy_\gamma) - F(\vf(\vx) + \gamma\nabla\vf(\vx)(\vy - \vx), \vy_\gamma) \le L_F \|\bm\Delta\| \le \tfrac{L_F L \gamma^2 \sqrt{n} D_\cX^2}{2}.
\]
Multiplying by $2/\gamma^2$ gives the claim.
\end{proof}

\section{Full Statements: Progress and Tracking-Error Lemmas} \label{sec:lemmas_appendix}

Let the tracking errors be $\bm{\delta}_{g,k} := \mV_k - \nabla \vf(\vy_k)$ and $\bm{\delta}_{f,k} := \vz_k - \vf(\vy_k)$.

\begin{lemma}[GLMO progress with tracking errors] \label{lem:glmo_main}
Under Assumptions~\ref{assump:domain}, \ref{assump:outer}, \ref{assump:inner_smooth}:
\[
\varphi(\vy_{k+1}) \le \varphi(\vy_k) - \gamma_k \hat\Delta_k + \tfrac{\gamma_k^2}{2} \mathcal{S} + 2 \gamma_k L_F (\|\bm{\delta}_{f,k}\| + D_{\cX} \|\bm{\delta}_{g,k}\|).
\]
\end{lemma}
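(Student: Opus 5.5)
The plan is the classical Frank--Wolfe one-step argument. I would first use the composite curvature $\mathcal{S}$ to pass from the true next objective value to a value of $F$ on the exact linearization. Next I would use convexity of $F$ along the segment to split that value into a $(1-\gamma_k)$ part and a $\gamma_k$ part. Finally I would use Lipschitzness of $F$ in $\vu$ twice to compare the GLMO output computed on the tracked surrogate $\tilde l_k$ with the exact minimizer. Throughout, write $l_k(\vx) := \vf(\vy_k) + \nabla \vf(\vy_k)(\vx - \vy_k)$ for the exact affine model. By definition \eqref{eq:gen_fw_gap}, $\min_{\vx\in\cX} F(l_k(\vx),\vx) = \varphi(\vy_k) - \hat\Delta_k$.

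\textbf{Step 1 (curvature).} Apply \eqref{eq:curvature_def} with $\vx = \vy_k$, $\vy = \vx_{k+1}$ and $\gamma = \gamma_k$, so that $\vy_\gamma = \vy_{k+1}$. This gives
\[
\varphi(\vy_{k+1}) \le F\big(\vf(\vy_k) + \gamma_k \nabla\vf(\vy_k)(\vx_{k+1}-\vy_k),\, \vy_{k+1}\big) + \tfrac{\gamma_k^2}{2}\mathcal{S}.
\]
Proposition~\ref{prop:curvature} guarantees that $\mathcal{S}$ is finite.

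\textbf{Step 2 (splitting along the segment).} The first argument above equals $(1-\gamma_k)\vf(\vy_k) + \gamma_k\, l_k(\vx_{k+1})$, and the second equals $(1-\gamma_k)\vy_k + \gamma_k \vx_{k+1}$. If $F$ is jointly convex in $(\vu,\vx)$, this yields
\[
F\big(\vf(\vy_k) + \gamma_k \nabla\vf(\vy_k)(\vx_{k+1}-\vy_k),\, \vy_{k+1}\big) \le (1-\gamma_k)\varphi(\vy_k) + \gamma_k F(l_k(\vx_{k+1}),\vx_{k+1}).
\]
This step is the main obstacle. Joint convexity of $F$ alone, without convexity of $\vf$, is what \citet{vladarean2023first} use for the Basic Method, but it is not listed in Assumption~\ref{assump:outer}; Assumption~\ref{assump:convex} is stated only for the convex theorem. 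I would therefore first check whether monotonicity and subhomogeneity can substitute for it. If they cannot, I would state explicitly that convexity of $F$ in $(\vu,\vx)$, as in the deterministic setting of \citet{vladarean2023first}, is used here. Note that this does not require any convexity of $\vf$, so the nonconvex setting of Theorem~\ref{thm:nonconvex} is unaffected.

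\textbf{Step 3 (GLMO comparison).} For any $\vx\in\cX$,
\[
\|l_k(\vx) - \tilde l_k(\vx)\| \le \|\bm\delta_{f,k}\| + \|\bm\delta_{g,k}\|_{\mathrm{op}}\|\vx-\vy_k\| \le \|\bm\delta_{f,k}\| + D_\cX\|\bm\delta_{g,k}\|,
\]
where the operator norm is bounded by the Frobenius norm. Let $\hat\vx_k$ be a minimizer of $F(l_k(\vx),\vx)$ over $\cX$. I would then chain three inequalities:
\[
F(l_k(\vx_{k+1}),\vx_{k+1}) \le F(\tilde l_k(\vx_{k+1}),\vx_{k+1}) + L_F E_k \le F(\tilde l_k(\hat\vx_k),\hat\vx_k) + L_F E_k \le F(l_k(\hat\vx_k),\hat\vx_k) + 2L_F E_k,
\]
with $E_k := \|\bm\delta_{f,k}\| + D_\cX\|\bm\delta_{g,k}\|$. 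The first and third inequalities use the $L_F$-Lipschitzness of $F$ in $\vu$, and the middle one uses that $\vx_{k+1}$ minimizes $F(\tilde l_k(\cdot),\cdot)$. The right-hand side equals $\varphi(\vy_k) - \hat\Delta_k + 2L_F E_k$.

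Substituting Step 3 into Step 2, and then Step 2 into Step 1, gives $\varphi(\vy_{k+1}) \le \varphi(\vy_k) - \gamma_k\hat\Delta_k + 2\gamma_k L_F E_k + \tfrac{\gamma_k^2}{2}\mathcal{S}$, which is exactly the claimed bound.
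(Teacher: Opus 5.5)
Your three steps match the paper's proof: the curvature bound at $(\vy_k,\vx_{k+1},\gamma_k)$, the convex-combination split of the linearized point, and the two Lipschitz comparisons around the GLMO minimizer with $E_k=\|\bm\delta_{f,k}\|+D_\cX\|\bm\delta_{g,k}\|$.

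The one difference is Step~2, and there you are right and the paper is not. The paper justifies the split by ``joint subhomogeneity and monotonicity'', but these properties cannot replace convexity of $F$. Here is a counterexample:
\begin{itemize}
\item Take $n=d=1$, $\cX=[-1,1]$, $f(x)=x$ and $F(u,x)=\min(u,0)$.
\item This $F$ is $1$-Lipschitz, non-decreasing and positively homogeneous in $u$, so it satisfies Assumption~\ref{assump:outer} in any reading of subhomogeneity.
\item Since $f$ is affine, $\mathcal{S}=0$.
\item Take exact trackers, the iterate $y_k=1$ and $\gamma_k=1/4$. The GLMO returns $x_{k+1}=-1$, so $\hat\Delta_k=0-(-1)=1$ and $y_{k+1}=1/2$.
\item Then $\varphi(y_{k+1})=0>-1/4=\varphi(y_k)-\gamma_k\hat\Delta_k$.
\end{itemize}
So the lemma as stated under Assumptions~\ref{assump:domain}--\ref{assump:inner_smooth} is false. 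This settles the check you left open: monotonicity and subhomogeneity do not substitute for convexity.

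Your fix is the correct one: add joint convexity of $F$ in $(\vu,\vx)$, which requires no convexity of $\vf$. With it, Step~2 holds exactly, and monotonicity and subhomogeneity are not used anywhere in this lemma. The same hypothesis must then be added to every result that invokes the lemma, including Theorem~\ref{thm:nonconvex}.
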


\begin{lemma}[Jacobian tracker, $r$-th moment] \label{lem:grad_tracker_main}
Under Assumptions~\ref{assump:inner_smooth} and \ref{assump:noise}, with constant $\beta \in (0,1]$ and $\|\vy_k - \vy_{k-1}\| \le \gamma D_{\cX}$:
\[
\E\|\bm{\delta}_{g,k}\|^r \le 3^{r-1}\!\left[(1-\beta)^{rk} \E\|\bm{\delta}_{g,0}\|^r + \frac{L^r D_{\cX}^r \gamma^r}{\beta^r} + C_r \sigma_g^r \beta^{r-1}\right].
\]
\end{lemma}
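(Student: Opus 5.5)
Unroll the Polyak recursion into three pieces, bound each separately, and combine them with Lemma~\ref{lem:norm_convex} at $m=3$, $s=r$; that combination produces the prefactor $3^{r-1}$. Writing $\vV_k^\xi := \nabla\tilde\vf(\vy_k;\xi_k)$ and subtracting $\nabla\vf(\vy_k)$ from the Jacobian update gives
\[
\bm\delta_{g,k} = (1-\beta)\bm\delta_{g,k-1} + (1-\beta)\big(\nabla\vf(\vy_{k-1}) - \nabla\vf(\vy_k)\big) + \beta\big(\vV_k^\xi - \nabla\vf(\vy_k)\big).
\]
Unrolling down to $k=0$ yields
\[
\bm\delta_{g,k} = (1-\beta)^k\bm\delta_{g,0} + \sum_{j=1}^k (1-\beta)^{k-j+1}\big(\nabla\vf(\vy_{j-1})-\nabla\vf(\vy_j)\big) + \beta\sum_{j=1}^k (1-\beta)^{k-j}\,\bm\epsilon_j ,
\]
with $\bm\epsilon_j := \vV_j^\xi - \nabla\vf(\vy_j)$. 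Call these three terms $T_1,T_2,T_3$. By Lemma~\ref{lem:norm_convex},
\[
\E\|\bm\delta_{g,k}\|^r \le 3^{r-1}\big(\E\|T_1\|^r + \E\|T_2\|^r + \E\|T_3\|^r\big),
\]
and it remains to bound the three expectations.

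$T_1$ gives $(1-\beta)^{rk}\E\|\bm\delta_{g,0}\|^r$ directly. For $T_2$, the drift term, I use Assumption~\ref{assump:inner_smooth}: each increment has norm at most $L\|\vy_j-\vy_{j-1}\|\le L\gamma D_{\cX}$. This is a deterministic pathwise bound, and the assumption controls the operator norm; I will take $\|\cdot\|$ as the norm the Lipschitz constant controls, or absorb any discrepancy into $L$. The triangle inequality and the geometric sum give
\[
\|T_2\| \le L\gamma D_{\cX}\sum_{i\ge 1}(1-\beta)^i \le \frac{L D_{\cX}\gamma}{\beta},
\]
so $\E\|T_2\|^r \le L^r D_{\cX}^r\gamma^r/\beta^r$.

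$T_3$, the noise term, is the main step. Let $\mathcal F_{j}$ be generated by $\xi_0,\dots,\xi_{j-1}$, so $\vy_j$ is $\mathcal F_j$-measurable. Unbiasedness then makes $\bm\epsilon_j$ a martingale difference sequence, and Assumption~\ref{assump:noise} applied conditionally gives $\E\|\bm\epsilon_j\|^r\le\sigma_g^r$. Lemma~\ref{lem:vbe}, applied to the scaled differences $\beta(1-\beta)^{k-j}\bm\epsilon_j$ (deterministic weights preserve the MDS property), gives
\[
\E\|T_3\|^r \le C_r\,\beta^r\sigma_g^r\sum_{i\ge0}(1-\beta)^{ri} \le \frac{C_r\beta^r\sigma_g^r}{1-(1-\beta)^r}.
\]
Since $r\ge1$ and $1-\beta\in[0,1)$, we have $(1-\beta)^r\le 1-\beta$, so the denominator is at least $\beta$ and $\E\|T_3\|^r\le C_r\sigma_g^r\beta^{r-1}$.

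The noise term is the only delicate one. The points to check are the filtration convention (that $\xi_k$ is drawn after $\vy_k$ is fixed) and that Lemma~\ref{lem:vbe} applies to the matrix-valued $\bm\epsilon_j$ under the Frobenius norm, which holds by identifying matrices with vectors in $\R^{nd}$. The contraction of the geometric weight sum to $\beta^{r-1}$ is the step that produces the heavy-tailed scaling. Summing the three bounds inside the $3^{r-1}$ factor completes the proof.
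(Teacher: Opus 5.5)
Your proposal is correct and follows the paper's proof essentially step for step. It uses the same one-step recursion and the same three-term unrolling combined via Lemma~\ref{lem:norm_convex} to obtain $3^{r-1}$, the triangle-inequality/geometric-series bound on the drift, and von Bahr--Esseen together with $(1-\beta)^r \le 1-\beta$ for the noise term. The operator-versus-Frobenius norm discrepancy you flag in the drift term is a genuine subtlety; the paper's own Step~5 passes over it the same way, simply asserting $\|\bm\Delta_i\| \le L\gamma D_{\cX}$.
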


\begin{lemma}[Polyak function tracker, $r$-th moment] \label{lem:func_polyak_main}
Under Assumptions~\ref{assump:inner_smooth}, \ref{assump:noise}, \ref{assump:inner_lipschitz}, with constant $\rho \in (0,1]$ and $\|\vy_k - \vy_{k-1}\| \le \gamma D_{\cX}$:
\[
\E\|\bm{\delta}_{f,k}\|^r \le 3^{r-1}\!\left[(1-\rho)^{rk} \E\|\bm{\delta}_{f,0}\|^r + \frac{G^r D_{\cX}^r \gamma^r}{\rho^r} + C_r \sigma_f^r \rho^{r-1}\right].
\]
\end{lemma}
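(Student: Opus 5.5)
We unroll the Polyak recursion for the function tracking error. Since $\vz_k = (1-\rho)\vz_{k-1} + \rho\tilde\vf(\vy_k;\xi_k)$, subtracting $\vf(\vy_k)$ gives
\[
\bm{\delta}_{f,k} = (1-\rho)\bm{\delta}_{f,k-1} + (1-\rho)\big(\vf(\vy_{k-1}) - \vf(\vy_k)\big) + \rho\,\bm{\epsilon}_k,
\]
where $\bm{\epsilon}_k := \tilde\vf(\vy_k;\xi_k) - \vf(\vy_k)$. Iterating down to $\bm{\delta}_{f,0}$ yields three pieces:
\[
\bm{\delta}_{f,k} = (1-\rho)^k\bm{\delta}_{f,0} + \sum_{j=1}^k (1-\rho)^{k-j+1}\big(\vf(\vy_{j-1})-\vf(\vy_j)\big) + \rho\sum_{j=1}^k (1-\rho)^{k-j}\bm{\epsilon}_j.
\]
We then apply Lemma~\ref{lem:norm_convex} with $m=3$ and $s=r$, which produces the prefactor $3^{r-1}$. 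It remains to bound the $r$-th moment of each piece.

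The first piece gives exactly $(1-\rho)^{rk}\E\|\bm{\delta}_{f,0}\|^r$.

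For the drift piece we use Assumption~\ref{assump:inner_lipschitz} together with $\|\vy_j-\vy_{j-1}\|\le \gamma D_{\cX}$. This gives $\|\vf(\vy_{j-1})-\vf(\vy_j)\|\le G\gamma D_{\cX}$, deterministically. The triangle inequality and the geometric sum $\sum_{i\ge 1}(1-\rho)^i \le 1/\rho$ then bound the norm by $G D_{\cX}\gamma/\rho$. Raising to the $r$-th power gives the middle term $G^rD_{\cX}^r\gamma^r/\rho^r$.

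For the noise piece, let $\mathcal{F}_{j-1}$ be the sigma-algebra generated by $\xi_0,\dots,\xi_{j-1}$. The point $\vy_j$ is $\mathcal{F}_{j-1}$-measurable, because $\vy_j$ is formed from $\vx_j$, which comes from the GLMO at step $j-1$. Since the oracle is unbiased, $\E[\bm{\epsilon}_j\mid\mathcal{F}_{j-1}]=0$. The weights $\rho(1-\rho)^{k-j}$ are deterministic, so $\{\rho(1-\rho)^{k-j}\bm{\epsilon}_j\}_j$ is a martingale difference sequence. Lemma~\ref{lem:vbe} and Assumption~\ref{assump:noise}, applied conditionally with the tower property, give
\[
\E\Big\|\rho\sum_j(1-\rho)^{k-j}\bm{\epsilon}_j\Big\|^r \le C_r\rho^r\sigma_f^r\sum_{i\ge 0}(1-\rho)^{ri}.
\]
We bound this sum using $(1-\rho)^{ri}\le (1-\rho)^i$, since $r>1$ and $1-\rho\in[0,1)$. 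Then $\sum_{i\ge0}(1-\rho)^{ri} \le 1/\rho$, so the bound becomes $C_r\sigma_f^r\rho^{r-1}$.

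The main point needing care is the martingale structure. If $\vz_0$ uses $\xi_0$ separately, it does not interact with the later terms. The only delicate point is that $\vy_j$ depends on past samples but not on $\xi_j$. Given that, the von Bahr--Esseen application is routine, and collecting the three terms gives the stated bound.
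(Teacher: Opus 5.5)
Your proof is correct and takes the same route as the paper. The paper proves this lemma by reusing the Jacobian-tracker argument: the one-step recursion, three-term unrolling, Lemma~\ref{lem:norm_convex} with $m=3$, a geometric-series bound on the drift via $\|\vf(\vy_{i-1})-\vf(\vy_i)\|\le G\gamma D_{\cX}$, and von Bahr--Esseen with $(1-\rho)^r\le 1-\rho$ on the noise. You wrote all of this out explicitly, and you also checked that $\vy_j$ is $\mathcal{F}_{j-1}$-measurable, which the paper leaves implicit.
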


\begin{lemma}[Taylor function tracker, $r$-th moment] \label{lem:func_taylor_main}
Under Assumptions~\ref{assump:inner_smooth} and \ref{assump:noise}, with constant $\rho \in (0,1]$ and $\|\vy_k - \vy_{k-1}\| \le \gamma D_{\cX}$:
\begin{align*}
\E\|\bm{\delta}_{f,k}\|^r &\le 4^{r-1}\!\Big[(1-\rho)^{rk} \E\|\bm{\delta}_{f,0}\|^r + \frac{L^r D_{\cX}^{2r} \gamma^{2r}}{2^r \rho^r} \\
&\qquad + \frac{D_{\cX}^r \gamma^r}{\rho^r} \max_{1 \le i \le k} \E\|\bm{\delta}_{g,i}\|^r + C_r \sigma_f^r \rho^{r-1}\Big].
\end{align*}
\end{lemma}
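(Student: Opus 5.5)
We need to prove Lemma~\ref{lem:func_taylor_main}, the bound on the Taylor function tracker. The plan is to unroll a one-step recursion for $\bm\delta_{f,k}$ into a bias part plus a martingale part, and bound each separately.

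\textbf{Error recursion.} Subtract $\vf(\vy_k)$ from the Variant II update and write $\tilde\vf(\vy_k;\xi_k) = \vf(\vy_k) + \bm\epsilon_k$, where $\bm\epsilon_k$ is the zero-mean oracle noise at step $k$. Adding and subtracting $(1-\rho)\vf(\vy_{k-1})$ and $(1-\rho)\nabla\vf(\vy_{k-1})(\vy_k-\vy_{k-1})$ gives
\[
\bm\delta_{f,k} = (1-\rho)\bm\delta_{f,k-1} + (1-\rho)\big[\vf(\vy_{k-1}) + \nabla\vf(\vy_{k-1})(\vy_k-\vy_{k-1}) - \vf(\vy_k)\big] + (1-\rho)\mV_k(\vy_k-\vy_{k-1}) - (1-\rho)\nabla\vf(\vy_{k-1})(\vy_k-\vy_{k-1}) + \rho\bm\epsilon_k .
\]
Denote the bracket by $-\mathbf{T}_k$; it is a Taylor remainder. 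The two Jacobian terms combine to $(1-\rho)(\mV_k - \nabla\vf(\vy_{k-1}))(\vy_k-\vy_{k-1})$.

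This Jacobian difference should be routed through $\bm\delta_{g,k} = \mV_k - \nabla\vf(\vy_k)$. The leftover piece $\nabla\vf(\vy_k) - \nabla\vf(\vy_{k-1})$ is then absorbed into the remainder. The cleanest choice is to expand around $\vy_k$ instead, using the first-order Taylor remainder of $\vf(\vy_{k-1})$ at $\vy_k$. Either way, Assumption~\ref{assump:inner_smooth} bounds the remainder as $\|\mathbf{T}_k\|\le \tfrac{L}{2}\|\vy_k-\vy_{k-1}\|^2 \le \tfrac{L}{2}D_\cX^2\gamma^2$, up to the constant convention in the statement. The Jacobian error term is bounded by $\|\bm\delta_{g,k}\|\,\gamma D_\cX$.

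\textbf{Unrolling.} Unrolling the recursion gives
\[
\bm\delta_{f,k} = (1-\rho)^k\bm\delta_{f,0} + \sum_{i=1}^k (1-\rho)^{k-i+1}\big(\bm\delta_{g,i}(\vy_i-\vy_{i-1}) - \mathbf{T}_i\big) + \rho\sum_{i=1}^k(1-\rho)^{k-i}\bm\epsilon_i .
\]
Split this into four pieces: the initial term, the Taylor sum, the Jacobian sum, and the noise sum. Lemma~\ref{lem:norm_convex} with $m=4$ produces the prefactor $4^{r-1}$. The four pieces are handled as follows.

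\begin{itemize}
\item The initial term gives $(1-\rho)^{rk}\E\|\bm\delta_{f,0}\|^r$ directly.
\item The Taylor sum is deterministically bounded by $\tfrac{L D_\cX^2\gamma^2}{2}\sum_i(1-\rho)^{k-i+1}\le \tfrac{LD_\cX^2\gamma^2}{2\rho}$. Raising to the $r$-th power gives the second term.
\item For the Jacobian sum, first use $\|\bm\delta_{g,i}(\vy_i-\vy_{i-1})\|\le \gamma D_\cX\|\bm\delta_{g,i}\|$. Normalize the geometric weights $w_i=(1-\rho)^{k-i+1}$, whose sum $W$ is at most $1/\rho$. Applying Lemma~\ref{lem:jensen} to $(\sum_i w_i a_i)^r = W^r(\sum_i (w_i/W)a_i)^r$ gives at most $W^{r-1}\sum_i w_i a_i^r$. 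Taking expectations and bounding each $\E\|\bm\delta_{g,i}\|^r$ by its maximum yields $\tfrac{\gamma^rD_\cX^r}{\rho^r}\max_i\E\|\bm\delta_{g,i}\|^r$.
\end{itemize}

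\textbf{Noise term (main obstacle).} The noise sum must be treated as a martingale. The issue is measurability: $\bm\epsilon_i$ has zero mean conditionally on $\mathcal{F}_{i-1}=\sigma(\xi_0,\dots,\xi_{i-1})$, and $\vy_i$ is $\mathcal{F}_{i-1}$-measurable. The sum involves no $\mV$ terms, so $\{\rho(1-\rho)^{k-i}\bm\epsilon_i\}_i$ is a genuine martingale difference sequence. Lemma~\ref{lem:vbe} and Assumption~\ref{assump:noise} then give
\[
C_r\rho^r\sigma_f^r\sum_i(1-\rho)^{r(k-i)} \le C_r\sigma_f^r\frac{\rho^r}{1-(1-\rho)^r}\le C_r\sigma_f^r\rho^{r-1},
\]
using $1-(1-\rho)^r\ge\rho$ for $r\ge1$.

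Care is needed on one point. The Jacobian term contains $\mV_k$, which depends on $\xi_k$, so it must not be mixed into the martingale part. It has to be handled only through the crude Jensen bound above, which is why the statement contains $\max_i\E\|\bm\delta_{g,i}\|^r$ rather than a sharper coupling. Collecting the four pieces gives the claimed bound.
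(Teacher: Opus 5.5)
Your proposal is correct and follows the paper's proof essentially step for step. You use the same error recursion with the remainder taken around $\vy_k$ (the paper's $\vR_k$, bounded by $L\gamma^2 D_\cX^2/2$), the same four-term split with the $4^{r-1}$ prefactor, von Bahr--Esseen for the noise sum, and a Jensen bound on the cross term yielding $\max_i \E\|\bm\delta_{g,i}\|^r$. The only cosmetic difference is that you normalize the weights $(1-\rho)^{k-i+1}$ by their sum $W \le 1/\rho$, whereas the paper uses $w_i = \rho(1-\rho)^{k-i}$ and factors out $(1-\rho)/\rho$; both give the same constant.
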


The Taylor correction reduces the dependence of the function-tracker bias on the step size from $\gamma^r/\rho^r$ (Polyak) to $\gamma^{2r}/\rho^r$ (Taylor), at the cost of a coupling to the Jacobian-tracker error which is itself controlled.

\section{Proof of Lemma~\ref{lem:glmo_main}} \label{sec:proof_glmo}

\begin{proof}
Define the exact and tracked surrogates $l_k(\vx) := \vf(\vy_k) + \nabla \vf(\vy_k)(\vx - \vy_k)$ and $\tilde l_k(\vx) := \vz_k + \mV_k(\vx - \vy_k)$. Their difference at any $\vx \in \cX$ is uniformly bounded:
\begin{equation} \label{eq:uniform_surrogate_bound}
\|\tilde l_k(\vx) - l_k(\vx)\| \le \|\bm{\delta}_{f,k}\| + D_{\cX} \|\bm{\delta}_{g,k}\| =: E_k.
\end{equation}

\textbf{Step 1.} By definition of $\mathcal{S}$, applied with $\vx \leftarrow \vy_k$, $\vy \leftarrow \vx_{k+1}$, $\gamma \leftarrow \gamma_k$:
\begin{equation} \label{eq:step1_curvature}
\varphi(\vy_{k+1}) \le F(\vf(\vy_k) + \gamma_k \nabla \vf(\vy_k)(\vx_{k+1} - \vy_k), \vy_{k+1}) + \tfrac{\gamma_k^2}{2} \mathcal{S}.
\end{equation}

\textbf{Step 2.} Using $\vf(\vy_k) + \gamma_k \nabla \vf(\vy_k)(\vx_{k+1} - \vy_k) = (1-\gamma_k)\vf(\vy_k) + \gamma_k l_k(\vx_{k+1})$, joint subhomogeneity, and monotonicity:
\begin{equation} \label{eq:step2_subhomog}
F(\cdot, \vy_{k+1}) \le (1 - \gamma_k) \varphi(\vy_k) + \gamma_k F(l_k(\vx_{k+1}), \vx_{k+1}).
\end{equation}

\textbf{Step 3.} Let $\bar \vx_{k+1} \in \argmin_{\vx \in \cX} F(l_k(\vx), \vx)$, so $F(l_k(\bar\vx_{k+1}), \bar\vx_{k+1}) = \varphi(\vy_k) - \hat\Delta_k$. By two applications of $L_F$-Lipschitzness and the GLMO optimality of $\vx_{k+1}$:
\begin{align*}
F(l_k(\vx_{k+1}), \vx_{k+1})
&\le F(\tilde l_k(\vx_{k+1}), \vx_{k+1}) + L_F E_k
\le F(\tilde l_k(\bar \vx_{k+1}), \bar \vx_{k+1}) + L_F E_k \\
&\le F(l_k(\bar \vx_{k+1}), \bar \vx_{k+1}) + 2 L_F E_k = \varphi(\vy_k) - \hat\Delta_k + 2 L_F E_k.
\end{align*}

\textbf{Step 4.} Substituting into \eqref{eq:step2_subhomog} and \eqref{eq:step1_curvature}:
\[
\varphi(\vy_{k+1}) \le \varphi(\vy_k) - \gamma_k \hat\Delta_k + \tfrac{\gamma_k^2}{2} \mathcal{S} + 2 \gamma_k L_F E_k. \qedhere
\]
\end{proof}

\section{Proof of Lemma~\ref{lem:grad_tracker_main}} \label{sec:proof_grad}

\begin{proof}
Let $\bm{\xi}_{g,i} := \nabla \tilde \vf(\vy_i; \xi_i) - \nabla \vf(\vy_i)$ and $\bm{\Delta}_i := \nabla \vf(\vy_{i-1}) - \nabla \vf(\vy_i)$. By Assumptions~\ref{assump:noise}, \ref{assump:inner_smooth}: $\E[\bm\xi_{g,i} | \mathcal{F}_{i-1}] = \mathbf{0}$, $\E\|\bm\xi_{g,i}\|^r \le \sigma_g^r$, $\|\bm\Delta_i\| \le L\gamma D_\cX$.

\emph{Step 1: Recursion.} Subtracting $\nabla \vf(\vy_k)$ from both sides of the Polyak update,
\[
\bm{\delta}_{g,k} = (1-\beta)\bm{\delta}_{g,k-1} + (1-\beta) \bm{\Delta}_k + \beta \bm{\xi}_{g,k}.
\]

\emph{Step 2: Unrolling.} Iterating from $i = 1$ to $k$:
\[
\bm{\delta}_{g,k} = \underbrace{(1-\beta)^k \bm{\delta}_{g,0}}_{T_1} + \underbrace{\sum_{i=1}^k (1-\beta)^{k-i+1} \bm\Delta_i}_{T_2} + \underbrace{\beta \sum_{i=1}^k (1-\beta)^{k-i} \bm\xi_{g,i}}_{T_3}.
\]

\emph{Step 3:} By Lemma~\ref{lem:norm_convex}, $\|\bm\delta_{g,k}\|^r \le 3^{r-1}(\|T_1\|^r + \|T_2\|^r + \|T_3\|^r)$.

\emph{Step 4: $T_1$.} $\|T_1\|^r = (1-\beta)^{rk} \|\bm\delta_{g,0}\|^r$.

\emph{Step 5: $T_2$.} By the triangle inequality and the geometric series, $\|T_2\| \le L\gamma D_\cX/\beta$, so $\|T_2\|^r \le L^r\gamma^r D_\cX^r/\beta^r$.

\emph{Step 6: $T_3$.} The sequence $\{(1-\beta)^{k-i}\bm\xi_{g,i}\}$ is a martingale difference sequence. By Lemma~\ref{lem:vbe} and using $(1-\beta)^r \le 1-\beta$:
\[
\E\|T_3\|^r \le \beta^r C_r \sum_i (1-\beta)^{r(k-i)} \sigma_g^r \le \beta^r C_r \sigma_g^r / \beta = C_r \sigma_g^r \beta^{r-1}.
\]

\emph{Step 7:} Combining, $\E\|\bm\delta_{g,k}\|^r \le 3^{r-1}\big[(1-\beta)^{rk}\E\|\bm\delta_{g,0}\|^r + L^r\gamma^r D_\cX^r/\beta^r + C_r \sigma_g^r \beta^{r-1}\big]$.
\end{proof}

\section{Proof of Lemma~\ref{lem:func_polyak_main}} \label{sec:proof_polyak}

\begin{proof}
Structurally identical to the proof of Lemma~\ref{lem:grad_tracker_main}, replacing $\beta \leftarrow \rho$, $\nabla \tilde\vf \leftarrow \tilde\vf$, $\nabla \vf \leftarrow \vf$, and using Assumption~\ref{assump:inner_lipschitz} for the deterministic drift: $\bm\Delta_i = \vf(\vy_{i-1}) - \vf(\vy_i)$ satisfies $\|\bm\Delta_i\| \le G\gamma D_\cX$.
\end{proof}

\section{Proof of Lemma~\ref{lem:func_taylor_main}} \label{sec:proof_taylor}

\begin{proof}
\emph{Step 1: Algebraic identity.} Define the Taylor remainder $\vR_k := \vf(\vy_k) - \vf(\vy_{k-1}) - \nabla\vf(\vy_k)(\vy_k - \vy_{k-1})$, with $\|\vR_k\| \le L\gamma^2 D_\cX^2/2$ by Assumption~\ref{assump:inner_smooth}. Subtracting $\vf(\vy_k)$ from the Variant II update and substituting $\vf(\vy_k) = \vf(\vy_{k-1}) + \nabla\vf(\vy_k)(\vy_k - \vy_{k-1}) - \vR_k$ inside the bracket:
\begin{equation}\label{eq:taylor_recursion_appendix}
\bm{\delta}_{f,k} = (1-\rho) \bm{\delta}_{f,k-1} + (1-\rho) \bm{\delta}_{g,k}(\vy_k - \vy_{k-1}) + (1-\rho) \vR_k + \rho \bm{\xi}_{f,k}.
\end{equation}

\emph{Step 2: Unrolling.} Iterating \eqref{eq:taylor_recursion_appendix} from $i = 1$ to $k$:
\begin{align*}
\bm\delta_{f,k} &= \underbrace{(1-\rho)^k \bm{\delta}_{f,0}}_{T_1} + \underbrace{\sum_{i=1}^k (1-\rho)^{k-i+1} \bm{\delta}_{g,i}(\vy_i - \vy_{i-1})}_{T_2} \\
&\quad + \underbrace{\sum_{i=1}^k (1-\rho)^{k-i+1} \vR_i}_{T_3} + \underbrace{\rho \sum_{i=1}^k (1-\rho)^{k-i} \bm\xi_{f,i}}_{T_4}.
\end{align*}

\emph{Step 3:} By Lemma~\ref{lem:norm_convex}, $\E\|\bm\delta_{f,k}\|^r \le 4^{r-1}(\E\|T_1\|^r + \E\|T_2\|^r + \E\|T_3\|^r + \E\|T_4\|^r)$.

\emph{Step 4: $T_1$.} $\E\|T_1\|^r = (1-\rho)^{rk}\E\|\bm\delta_{f,0}\|^r$.

\emph{Step 5: $T_3$ (Taylor remainder drift).} $\|T_3\| \le L\gamma^2 D_\cX^2/(2\rho)$, hence $\E\|T_3\|^r \le L^r\gamma^{2r} D_\cX^{2r}/(2^r\rho^r)$.

\emph{Step 6: $T_4$ (martingale noise).} As in the Jacobian-tracker proof, $\E\|T_4\|^r \le C_r \sigma_f^r \rho^{r-1}$.

\emph{Step 7: $T_2$ (cross-tracking term).} Define weights $w_i := \rho(1-\rho)^{k-i}$, so $\sum_i w_i \le 1$. Then $T_2 = \tfrac{1-\rho}{\rho}\sum_i w_i \bm\delta_{g,i}(\vy_i - \vy_{i-1})$, and $\|T_2\| \le \tfrac{\gamma D_\cX}{\rho}\sum_i w_i \|\bm\delta_{g,i}\|$. By Lemma~\ref{lem:jensen}:
\[
\E\|T_2\|^r \le \tfrac{\gamma^r D_\cX^r}{\rho^r} \sum_i w_i \E\|\bm\delta_{g,i}\|^r \le \tfrac{\gamma^r D_\cX^r}{\rho^r} \max_{1 \le i \le k} \E\|\bm\delta_{g,i}\|^r.
\]

\emph{Step 8:} Combining yields the lemma.
\end{proof}

\section{Proof of Theorem~\ref{thm:nonconvex}} \label{sec:proof_nonconvex}

We give a unified argument that handles both variants. Throughout, $C_r$ denotes the von Bahr--Esseen constant.

\subsection{First-moment tracking-error bounds}

For any non-negative random variable $X$, Jensen gives $\E[X] \le (\E X^r)^{1/r}$; combined with Lemma~\ref{lem:subadd}, the $r$-th-moment bounds of Lemmas~\ref{lem:grad_tracker_main}--\ref{lem:func_taylor_main} imply, with the schedule $\beta = \rho$:
\begin{align}
\E\|\bm\delta_{g,k}\| &\le \mathcal{E}_{g,0}(1-\beta)^k + U_g \cdot \tfrac{\gamma}{\beta} + U_g \cdot \beta^{(r-1)/r}, \label{eq:grad_first_simplified} \\
\E\|\bm\delta_{f,k}\|_{\mathrm{Var\ I}} &\le \mathcal{E}_{f,0}(1-\rho)^k + U_f^{\mathrm{I}} \cdot \tfrac{\gamma}{\rho} + U_f^{\mathrm{I}} \cdot \rho^{(r-1)/r}, \label{eq:func_polyak_first} \\
\E\|\bm\delta_{f,k}\|_{\mathrm{Var\ II}} &\le \mathcal{E}_{f,0}^{\mathrm{II}}(1-\rho)^k + U_f^{\mathrm{II}}\Big(\tfrac{\gamma^2}{\rho} + \tfrac{\gamma}{\rho}\mathcal{E}_{g,0} + \tfrac{\gamma^2}{\rho^2} + \tfrac{\gamma}{\rho^{1/r}} + \rho^{(r-1)/r}\Big), \label{eq:func_taylor_first}
\end{align}
where
\begin{align*}
\mathcal{E}_{g,0} &:= 3^{(r-1)/r}(\E\|\bm\delta_{g,0}\|^r)^{1/r}, &
U_g &:= 3^{(r-1)/r}\max\{LD_\cX, C_r^{1/r}\sigma_g\}, \\
\mathcal{E}_{f,0} &:= 3^{(r-1)/r}(\E\|\bm\delta_{f,0}\|^r)^{1/r}, &
U_f^{\mathrm{I}} &:= 3^{(r-1)/r}\max\{GD_\cX, C_r^{1/r}\sigma_f\}, \\
\mathcal{E}_{f,0}^{\mathrm{II}} &:= 4^{(r-1)/r}(\E\|\bm\delta_{f,0}\|^r)^{1/r}, &
U_f^{\mathrm{II}} &:= 4^{(r-1)/r}\max\{LD_\cX^2/2,\ U_g D_\cX,\ C_r^{1/r}\sigma_f\}.
\end{align*}

\subsection{Telescoping and balancing}

Summing Lemma~\ref{lem:glmo_main} from $k=0$ to $K-1$, telescoping using $\E[\varphi(\vy_K)] - \varphi(\vy_0) \ge -\Phi_0$, and dividing by $\gamma K$:
\begin{equation} \label{eq:telescope_main}
\min_{0 \le k \le K-1} \E[\hat\Delta_k] \le \frac{\Phi_0}{\gamma K} + \frac{\gamma\mathcal{S}}{2} + \frac{2L_F}{K}\sum_{k=0}^{K-1}\big(\E\|\bm\delta_{f,k}\| + D_\cX \E\|\bm\delta_{g,k}\|\big).
\end{equation}

Each tracker bound has the form $A(1-\beta)^k + (\text{constant terms})$, and $\sum_{k=0}^{K-1}(1-\beta)^k \le 1/\beta$. With the schedule $\gamma = K^{-(2r-1)/(3r-2)}, \beta = \rho = K^{-r/(3r-2)}$, every term in \eqref{eq:telescope_main} can be verified to scale at most as $K^{-(r-1)/(3r-2)}$:
\begin{itemize}
\item $\Phi_0/(\gamma K) = K^{-(r-1)/(3r-2)}$ (tight); $\gamma \le K^{-(r-1)/(3r-2)}$ for $r \ge 1$;
\item $1/(K\beta) = K^{-(2r-2)/(3r-2)}$ (faster);
\item $\gamma/\beta = K^{-(r-1)/(3r-2)}$ (tight); $\beta^{(r-1)/r} = K^{-(r-1)/(3r-2)}$ (tight);
\item Variant II auxiliary terms $\gamma^2/\rho = K^{-1}$, $\gamma^2/\rho^2 = K^{-2(r-1)/(3r-2)}$, $\gamma/\rho^{1/r} = K^{-(2r-2)/(3r-2)}$ (all faster).
\end{itemize}

\subsection{Explicit constants}

Collecting prefactors:
\begin{equation} \label{eq:MI_explicit}
\boxed{M_{\mathrm{I}} = \Phi_0 + \tfrac{1}{2} L_F L D_{\cX}^2 \sqrt{n} + 2 L_F (\mathcal{E}_{f,0} + 2 U_f^{\mathrm{I}}) + 2 L_F D_{\cX}(\mathcal{E}_{g,0} + 2 U_g)}
\end{equation}
\begin{equation} \label{eq:MII_explicit}
\boxed{M_{\mathrm{II}} = \Phi_0 + \tfrac{1}{2} L_F L D_{\cX}^2 \sqrt{n} + 2 L_F (\mathcal{E}_{f,0}^{\mathrm{II}} + (3 + \mathcal{E}_{g,0}) U_f^{\mathrm{II}}) + 2 L_F D_{\cX}(\mathcal{E}_{g,0} + 2 U_g)}
\end{equation}

For each variant, $\min_k \E[\hat\Delta_k] \le M_{(\cdot)} \cdot K^{-(r-1)/(3r-2)}$. This completes the proof.\hfill\qed

\section{Proof of Theorem~\ref{thm:convex}} \label{sec:proof_convex}

We work with primal suboptimality $h_k := \E[\varphi(\vy_k) - \varphi(\vx^*)]$.

\subsection{Primal gap inequality}

\begin{lemma}[Primal suboptimality from FW gap, convex case] \label{lem:convex_primal}
Under Assumptions~\ref{assump:outer} and \ref{assump:convex}: $\hat\Delta(\vy) \ge \varphi(\vy) - \varphi(\vx^*)$.
\end{lemma}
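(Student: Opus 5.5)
Since $\hat\Delta(\vy) = \varphi(\vy) - \min_{\vx \in \cX} F(\vf(\vy) + \nabla\vf(\vy)(\vx - \vy), \vx)$, it suffices to show that the minimum of the linearized model is at most $\varphi(\vx^*)$. We do this by evaluating the model at the particular point $\vx = \vx^*$:
\[
\min_{\vx \in \cX} F\big(\vf(\vy) + \nabla\vf(\vy)(\vx - \vy), \vx\big) \le F\big(\vf(\vy) + \nabla\vf(\vy)(\vx^* - \vy), \vx^*\big).
\]
The remaining task is to compare the right-hand side with $F(\vf(\vx^*), \vx^*) = \varphi(\vx^*)$.

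This comparison uses convexity of each component $f_i$ on $\cX$ (Assumption~\ref{assump:convex}). The first-order characterization of convexity gives
\[
f_i(\vx^*) \ge f_i(\vy) + \nabla f_i(\vy)^\top(\vx^* - \vy)
\]
for every $i$. Stacked over $i$, this is the componentwise inequality $\vf(\vy) + \nabla\vf(\vy)(\vx^* - \vy) \le \vf(\vx^*)$.

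By monotonicity of $F$ in $\vu$ (Assumption~\ref{assump:outer}(ii)), with the second argument held fixed at $\vx^*$,
\[
F\big(\vf(\vy) + \nabla\vf(\vy)(\vx^* - \vy), \vx^*\big) \le F(\vf(\vx^*), \vx^*) = \varphi(\vx^*).
\]
Chaining this with the first display gives $\hat\Delta(\vy) \ge \varphi(\vy) - \varphi(\vx^*)$.

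The only step needing care is the first-order inequality, because $\vx^*$ and $\vy$ both lie in $\cX$ and $f_i$ is only assumed convex on $\cX$. Convexity of $\cX$ handles this: the segment between $\vy$ and $\vx^*$ stays in $\cX$, and $f_i$ is differentiable there by Assumption~\ref{assump:inner_smooth}, so the gradient inequality holds (one-sided at boundary points if needed). Joint convexity of $F$ is not used in this argument; neither are subhomogeneity or Lipschitzness.
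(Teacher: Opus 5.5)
Your proof is correct and follows the paper's argument: bound the minimum of the linearized model by its value at $\vx^*$, use the first-order convexity inequality for each $f_i$ to get $\vf(\vy)+\nabla\vf(\vy)(\vx^*-\vy)\le \vf(\vx^*)$ componentwise, and conclude by monotonicity of $F$ in $\vu$. Your remark that joint convexity, subhomogeneity, and Lipschitzness of $F$ are not used also matches the paper's proof.
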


\begin{proof}
Let $\bar\vx \in \argmin_{\vx \in \cX} F(l(\vx), \vx)$ where $l(\vx) := \vf(\vy) + \nabla\vf(\vy)(\vx - \vy)$. Since $\bar\vx$ is the minimizer over $\cX$,
$F(l(\bar\vx), \bar\vx) \le F(l(\vx^*), \vx^*)$. By convexity of each $f_i$ (Assumption~\ref{assump:convex}), $\vf(\vx^*) \ge l(\vx^*)$ componentwise. By monotonicity of $F$ in $\vu$, $F(l(\vx^*), \vx^*) \le F(\vf(\vx^*), \vx^*) = \varphi(\vx^*)$. Hence $\hat\Delta(\vy) \ge \varphi(\vy) - \varphi(\vx^*)$.
\end{proof}

Combining with Lemma~\ref{lem:glmo_main}:
\begin{equation} \label{eq:convex_recur}
h_{k+1} \le (1 - \gamma_k) h_k + \tfrac{\gamma_k^2}{2}\mathcal{S} + 2\gamma_k L_F\big(\E\|\bm\delta_{f,k}\| + D_\cX \E\|\bm\delta_{g,k}\|\big).
\end{equation}

\subsection{Tracking errors under decreasing schedule}

\begin{lemma}[Decreasing-schedule tracking] \label{lem:convex_tracking}
Set $y := r/(2r-1)$ and $q := y(r-1) = r(r-1)/(2r-1)$. Choose $c_0 = 1$ and $k_0 := \lceil (4q)^{1/(1-y)}\rceil + 2$. Under Assumptions~\ref{assump:inner_smooth}, \ref{assump:noise} and the schedule \eqref{eq:convex_schedule}, there exist explicit constants $Q_g, Q_f > 0$ such that for all $k \ge 0$:
\[
\E\|\bm\delta_{g,k}\| \le Q_g (k+k_0)^{-(r-1)/(2r-1)}, \qquad \E\|\bm\delta_{f,k}\| \le Q_f (k+k_0)^{-(r-1)/(2r-1)}.
\]
\end{lemma}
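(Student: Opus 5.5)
We will prove the bounds by induction on the $r$-th moments rather than by unrolling. Write $\beta_k = (k+k_0)^{-y}$ with $y = r/(2r-1)$ and $c_0=1$; since $k_0\ge 2$, $\beta_k\le 1$. With $\gamma_{k-1}=2/(k+1)$ we have $\|\vy_k-\vy_{k-1}\|\le \gamma_{k-1}D_\cX\le 2D_\cX(k+k_0)^{-1}\cdot\frac{k+k_0}{k+1}$. We will simply use $\|\vy_k-\vy_{k-1}\|\le c\,D_\cX (k+k_0)^{-1}$ with $c = 2k_0$. The goal is $a_k := \E\|\bm\delta_{g,k}\|^r \le P_g (k+k_0)^{-q}$, where $q = r(r-1)/(2r-1)$. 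Jensen's inequality $\E\|\cdot\|\le(\E\|\cdot\|^r)^{1/r}$ then gives the stated first-moment rate $(k+k_0)^{-(r-1)/(2r-1)}$ with $Q_g=P_g^{1/r}$.

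\emph{Step 1: one-step $r$-th moment recursion.} From the Jacobian recursion $\bm\delta_{g,k} = (1-\beta_k)(\bm\delta_{g,k-1}+\bm\Delta_k) + \beta_k\bm\xi_{g,k}$, the noise is a martingale increment conditionally on $\mathcal F_{k-1}$. We will use the standard $r\in(1,2]$ inequality $\E\|\va+\vb\|^r \le \E\|\va\|^r + C_r\E\|\vb\|^r$ for $\E[\vb\mid\va]=0$, which is the two-term version of Lemma~\ref{lem:vbe}. For the drift we use $\|\va+\vb\|^r\le (1+\beta/2)^{r-1}\|\va\|^r + (1+2/\beta)^{r-1}\|\vb\|^r$. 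Together these give
\[
a_k \le (1-\beta_k)^r(1+\tfrac{\beta_k}{2})^{r-1} a_{k-1} + 3^{r-1}\beta_k^{1-r}L^rc^rD_\cX^r(k+k_0)^{-r} + C_r\sigma_g^r\beta_k^r .
\]
Using $(1-\beta)^r(1+\beta/2)^{r-1}\le 1-\beta/2$, this becomes $a_k\le(1-\tfrac{\beta_k}{2})a_{k-1} + B(k+k_0)^{-yr}$. The choice of $y$ makes the two additive terms of the same order: $\beta^{1-r}(k+k_0)^{-r}$ has exponent $y(r-1)-r = -yr$ because $y(2r-1)=r$, and $\beta^r$ has exponent $-yr$ as well. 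Hence $B=3^{r-1}L^rc^rD_\cX^r + C_r\sigma_g^r$.

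\emph{Step 2: induction.} We claim $a_k\le P_g(k+k_0)^{-q}$ with $P_g=\max\{k_0^q\,\E\|\bm\delta_{g,0}\|^r,\,4B\}$. For the inductive step we need
\[
(1-\tfrac12(k+k_0)^{-y})P_g(k+k_0-1)^{-q} + B(k+k_0)^{-yr}\le P_g(k+k_0)^{-q}.
\]
Note $yr = q+y$. Using $(1-1/m)^{-q}\le 1+2q/m$ for $m\ge 2q$, the left side is at most $P_g m^{-q}\bigl[(1-\tfrac12 m^{-y})(1+2q m^{-1}) + \tfrac{B}{P_g}m^{-y}\bigr]$ with $m=k+k_0$. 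It therefore suffices that $2q m^{-1}\le \tfrac14 m^{-y}$, i.e.\ $m^{1-y}\ge 8q$, and that $B/P_g\le 1/4$. We will enlarge $k_0$ slightly (or adjust the constant) so that $k_0^{1-y}\ge 8q$; the stated $k_0=\lceil(4q)^{1/(1-y)}\rceil+2$ should suffice after tightening the inequality $(1-1/m)^{-q}\le 1+ q/(m-1)\cdot$const. This bookkeeping of constants is the main obstacle, though it is routine.

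\emph{Step 3: function trackers.} Variant I is identical, with $G$ replacing $L$ and $\sigma_f$ replacing $\sigma_g$. For Variant II, the recursion \eqref{eq:taylor_recursion_appendix} has drift $\bm\delta_{g,k}(\vy_k-\vy_{k-1})+\vR_k$. Its $r$-th moment is at most $2^{r-1}\bigl[c^rD_\cX^r(k+k_0)^{-r}a_k + (Lc^2D_\cX^2/2)^r(k+k_0)^{-2r}\bigr]$. After multiplication by $\beta^{1-r}$, these terms have exponents $-yr-q$ and $-yr-r$, both at most $-yr$, since $a_k$ is already bounded by Step 2. The same induction then yields $P_f$, and $Q_f=P_f^{1/r}$.
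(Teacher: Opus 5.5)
Your route is the paper's: a one-step $r$-th moment recursion from a Young split plus a conditional von Bahr--Esseen step, exponent matching via $y(2r-1)=r$, induction on $\E\|\bm\delta_{g,k}\|^r\le P_g(k+k_0)^{-q}$ using $(1-1/m)^{-q}\le 1+2q/m$, then substituting the Jacobian bound into the Taylor cross-term and finishing with Jensen. Your bound $\gamma_{k-1}=2/(k+1)\le 2k_0/(k+k_0)$ is more careful than the paper's $\gamma_{k-1}\le 4(k+k_0)^{-1}$, which fails for $k<k_0-2$ (and $k_0=21$ at $r=2$). So your drift constant, which carries a factor of order $k_0^r$, is the correct one. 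One citation point: the coefficient-one inequality $\E\|\va+\vb\|^r\le\E\|\va\|^r+C_r\E\|\vb\|^r$ is sharper than Lemma~\ref{lem:vbe} as stated, which puts $C_r$ on both terms. It does hold in Euclidean space, but it is a separate fact.

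Two steps need repair before this proves the lemma as stated.

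\emph{The $k_0$ bookkeeping does not close as written.} Weakening the contraction to $1-\beta_k/2$ and splitting the slack as $\tfrac14+\tfrac14$ forces $k_0^{1-y}\ge 8q$. The stated $k_0$ violates this: at $r=2$ it gives $k_0=21$, while you would need $k_0\ge(16/3)^3\approx 152$. Enlarging $k_0$ would change the schedule in the statement. The weakening is unnecessary, because $(1-\beta)(1+\beta/2)\le 1$ and so your own inequality already gives $(1-\beta)^r(1+\beta/2)^{r-1}\le 1-\beta$. With the full contraction, the inductive step only needs $2qm^{-1}+(B/P_g)m^{-y}\le m^{-y}$ for $m=k+k_0$. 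This holds because $k_0^{1-y}\ge 4q$, which is exactly what the stated $k_0$ guarantees, provided $P_g\ge 2B$; this is the paper's closing condition $\bar Q_g\ge 2M_g$. Also, the side condition for $(1-1/m)^{-q}\le 1+2q/m$ is $m\ge 2$ with $q\le 1$, not $m\ge 2q$.

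\emph{The Variant II cross term is not predictable.} The term $\bm\delta_{g,k}(\vy_k-\vy_{k-1})$ depends on $\xi_k$ through $\beta_k\bm\xi_{g,k}$, so it is not $\mathcal F_{k-1}$-measurable. Since the function and Jacobian noises come from the same sample, the condition $\E[\vb\mid\va]=0$ from your Step 1 fails if you run ``the same'' step with $\vb=\rho_k\bm\xi_{f,k}$. The fix is to reverse the order. First apply Young to peel off the whole drift $(1-\rho_k)\big(\bm\delta_{g,k}(\vy_k-\vy_{k-1})+\vR_k\big)$. Then apply the two-term inequality to $(1-\rho_k)\bm\delta_{f,k-1}+\rho_k\bm\xi_{f,k}$. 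This costs only a factor $(1+\eta)^{r-1}\le 2^{r-1}$ on the noise term. Alternatively, move $(1-\rho_k)\beta_k\bm\xi_{g,k}(\vy_k-\vy_{k-1})$ into the martingale increment. The paper's ``analogous recurrence'' glosses over the same point.
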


\begin{proof}
We prove the Jacobian bound; the function tracker is analogous. From the recursion of Lemma~\ref{lem:grad_tracker_main}, applying the Young-type inequality $\|\va + \vb\|^r \le (1+\eta)^{r-1}\|\va\|^r + (1+1/\eta)^{r-1}\|\vb\|^r$ with $\eta = \beta_k$ and using $(1-\beta_k)^r(1+\beta_k)^{r-1} \le 1-\beta_k$:
\[
\E\|\bm\delta_{g,k}\|^r \le (1-\beta_k)\E\|\bm\delta_{g,k-1}\|^r + 2^{r-1}\beta_k^{-(r-1)} L^r D_\cX^r \gamma_{k-1}^r + C_r \sigma_g^r \beta_k^r.
\]
With $\gamma_{k-1} \le 4(k+k_0)^{-1}$ and $\beta_k = (k+k_0)^{-y}$, both $\beta_k^{-(r-1)}\gamma_{k-1}^r$ and $\beta_k^r$ scale as $(k+k_0)^{-yr}$ (since $y(r-1) - r = -yr$):
\begin{equation}\label{eq:convex_recur_clean}
\E\|\bm\delta_{g,k}\|^r \le \big(1 - (k+k_0)^{-y}\big)\E\|\bm\delta_{g,k-1}\|^r + M_g (k+k_0)^{-yr},
\end{equation}
where $M_g := 2^{r-1}\cdot 4^r L^r D_\cX^r + C_r \sigma_g^r$.

\emph{Induction.} We prove $\E\|\bm\delta_{g,k}\|^r \le \bar Q_g(k+k_0)^{-q}$ by induction. Base case: $\bar Q_g \ge k_0^q \E\|\bm\delta_{g,0}\|^r$. For the inductive step, using $(1-x)^{-q} \le 1 + 2qx$ for $x \le 1/2$ and $q \le 1$ (which holds for $k_0 \ge 2$):
\[
(k-1+k_0)^{-q} \le (k+k_0)^{-q}\big(1 + 2q/(k+k_0)\big).
\]
Substituting and using $-q-y = -yr$:
\[
\E\|\bm\delta_{g,k}\|^r \le \bar Q_g(k+k_0)^{-q} + (k+k_0)^{-yr}[M_g - \bar Q_g] + 2q\bar Q_g(k+k_0)^{-q-1}.
\]
Since $(k+k_0)^{-q-1} \le k_0^{y-1}(k+k_0)^{-yr}$, the induction closes provided $\bar Q_g \ge M_g/(1 - 2q k_0^{y-1})$. Choosing $k_0$ as in the lemma ensures $2qk_0^{y-1} \le 1/2$, so $\bar Q_g \ge 2 M_g$ suffices:
\begin{equation} \label{eq:Qg_explicit}
\bar Q_g := \max\{k_0^q \E\|\bm\delta_{g,0}\|^r,\ 2 M_g\}, \qquad Q_g := \bar Q_g^{1/r}.
\end{equation}

The function tracker satisfies an analogous recurrence with the additional cross-coupling term involving $\E\|\bm\delta_{g,k}\|^r$; substituting the just-proved Jacobian bound, the same induction yields
\begin{equation} \label{eq:Qf_explicit}
Q_f := \big(\max\{k_0^q \E\|\bm\delta_{f,0}\|^r,\ 2 M_f\}\big)^{1/r},
\end{equation}
where $M_f := 4^{r-1}[4^r L^r D_\cX^{2r}/2^r + 4^r D_\cX^r \bar Q_g + C_r \sigma_f^r]$ (Variant II); for Variant I, replace $L^r D_\cX^{2r}/2^r + D_\cX^r \bar Q_g$ with $G^r D_\cX^r$. Applying Jensen yields the first-moment bounds.
\end{proof}

\subsection{Solving the primal recurrence}

Let $p := (r-1)/(2r-1)$. Substituting Lemma~\ref{lem:convex_tracking} into \eqref{eq:convex_recur} and using $k_0 \ge 2$ to bound $(k+k_0)^{-p} \le 2^p (k+2)^{-p}$:
\[
h_{k+1} \le \big(1 - \tfrac{2}{k+2}\big) h_k + \frac{B}{(k+2)^{1+p}}, \qquad B := 2\mathcal{S} + 4 L_F (Q_f + D_\cX Q_g)\cdot 2^p.
\]

\emph{Induction claim:} $h_k \le A_{\mathrm{cvx}}(k+2)^{-p}$. Base case: $A_{\mathrm{cvx}} \ge 2^p \Phi_0$. For the inductive step, using $(k+3)^{-p} \ge (k+2)^{-p}(1 - p/(k+2))$:
\[
h_{k+1} \le A_{\mathrm{cvx}}(k+2)^{-p} - \frac{2A_{\mathrm{cvx}} - B}{(k+2)^{1+p}} \le A_{\mathrm{cvx}}(k+3)^{-p}
\]
provided $A_{\mathrm{cvx}}(2-p) \ge B$. Taking
\begin{equation}\label{eq:Acvx_def}
\boxed{A_{\mathrm{cvx}} := \max\Big\{2^p \Phi_0,\ \tfrac{B}{2-p}\Big\}}
\end{equation}
closes the induction. Since $\hat\Delta_K \ge h_K$ by Lemma~\ref{lem:convex_primal}, $\E[\hat\Delta_K] \le A_{\mathrm{cvx}}(K+2)^{-p}$.\hfill\qed

\section{Proof of Corollary~\ref{cor:deterministic}} \label{sec:proof_deterministic}

\begin{proof}
With $\sigma_f = \sigma_g = 0$ and $\beta_k = \rho_k = 1$, the trackers $\mV_k = \nabla\tilde\vf(\vy_k; \xi_k) = \nabla\vf(\vy_k)$ (for Variant I); for Variant II, the Taylor correction term has prefactor $1 - \rho_k = 0$, and $\vz_k = \tilde\vf(\vy_k; \xi_k) = \vf(\vy_k)$. Hence $\bm\delta_{g,k} = \bm\delta_{f,k} = \mathbf{0}$ for all $k \ge 1$. Lemma~\ref{lem:glmo_main} reduces to
\[
\varphi(\vy_{k+1}) \le \varphi(\vy_k) - \gamma_k \hat\Delta_k + \tfrac{\gamma_k^2}{2}\mathcal{S},
\]
which is exactly the deterministic progress bound (Lemma~A.1 of \citet{vladarean2023first}). Their Theorems~3.1 and 3.2 then apply with the cited deterministic schedules.
\end{proof}

\section{Acceleration via STORM} \label{sec:storm_extension}

The Polyak Jacobian tracker yields rate $\mathcal{O}(K^{-(r-1)/(3r-2)})$. STORM replaces the Polyak update by a stochastic finite-difference correction that achieves $\mathcal{O}(K^{-(r-1)/(2r-1)})$, at the cost of $r$-average smoothness.

\begin{assumption}[$r$-average smoothness] \label{assump:avg_smooth}
There exists $\bar L > 0$ such that $\E_\xi\|\nabla\tilde\vf(\vx; \xi) - \nabla\tilde\vf(\vy; \xi)\|^r \le \bar L^r \|\vx - \vy\|^r$ for all $\vx, \vy \in \cX$.
\end{assumption}

The STORM Jacobian update reuses $\xi_k$ at $\vy_k$ and $\vy_{k-1}$:
\begin{equation} \label{eq:storm_grad}
\mV_k = \nabla\tilde\vf(\vy_k; \xi_k) + (1-\beta)[\mV_{k-1} - \nabla\tilde\vf(\vy_{k-1}; \xi_k)].
\end{equation}
The function tracker remains the Taylor-corrected Variant II.

\begin{lemma}[STORM Jacobian tracking error] \label{lem:storm_tracker}
Under Assumptions~\ref{assump:inner_smooth}, \ref{assump:noise}, \ref{assump:avg_smooth} with \eqref{eq:storm_grad} and $\|\vy_k - \vy_{k-1}\| \le \gamma D_\cX$:
\[
\E\|\bm\delta_{g,k}\|^r \le (1-\beta)^k \E\|\bm\delta_{g,0}\|^r + \frac{2^r C_r \bar L^r D_\cX^r \gamma^r}{\beta} + C_r \sigma_g^r \beta^{r-1}.
\]
\end{lemma}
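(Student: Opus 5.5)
Following the proof of Lemma~\ref{lem:grad_tracker_main}, I would first derive a one-step recursion for $\bm\delta_{g,k}$, then unroll it and separate a decaying initial term from a martingale sum. Write $\bm\xi_{g,k} := \nabla\tilde\vf(\vy_k;\xi_k) - \nabla\vf(\vy_k)$ and define the STORM correction noise
\[
\bm\zeta_k := \big(\nabla\tilde\vf(\vy_k;\xi_k) - \nabla\tilde\vf(\vy_{k-1};\xi_k)\big) - \big(\nabla\vf(\vy_k) - \nabla\vf(\vy_{k-1})\big).
\]
Subtracting $\nabla\vf(\vy_k)$ from \eqref{eq:storm_grad} gives the exact identity
\[
\bm\delta_{g,k} = (1-\beta)\bm\delta_{g,k-1} + (1-\beta)\bm\zeta_k + \beta\bm\xi_{g,k}.
\]
The drift term $\bm\Delta_k$ of the Polyak analysis has cancelled. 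This cancellation is the reason the $\gamma^r/\beta^r$ term improves to $\gamma^r/\beta$. Both $\bm\zeta_k$ and $\bm\xi_{g,k}$ are conditionally mean-zero given $\mathcal F_{k-1}$, because $\vy_k,\vy_{k-1}$ are $\mathcal F_{k-1}$-measurable and $\xi_k$ is fresh.

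\textbf{Moment bounds on the noise terms.} Assumption~\ref{assump:avg_smooth}, together with Jensen's inequality applied to the centering, gives
\[
\E\|\bm\zeta_k\|^r \le 2^{r-1}\big(\bar L^r + L^r\big)\|\vy_k - \vy_{k-1}\|^r .
\]
Since $L \le \bar L$ follows from Assumption~\ref{assump:avg_smooth} by Jensen, this is at most $2^r \bar L^r \gamma^r D_\cX^r$. Assumption~\ref{assump:noise} gives $\E\|\bm\xi_{g,k}\|^r \le \sigma_g^r$.

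\textbf{Unrolling.} Iterating the recursion yields
\[
\bm\delta_{g,k} = (1-\beta)^k\bm\delta_{g,0} + \sum_{i=1}^k (1-\beta)^{k-i}\big[(1-\beta)\bm\zeta_i + \beta\bm\xi_{g,i}\big].
\]
The sum is a martingale with respect to $\mathcal F_i$. The initial term does not separate cleanly from it if I simply apply Lemma~\ref{lem:norm_convex}, since that introduces a factor $2^{r-1}$ or $3^{r-1}$. The stated bound has coefficient $1$ in front of $(1-\beta)^k\E\|\bm\delta_{g,0}\|^r$, so I would instead treat $\bm\delta_{g,0}$ as the zeroth element of the martingale difference sequence. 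This works because $\bm\delta_{g,0} = \nabla\tilde\vf(\vy_0;\xi_0) - \nabla\vf(\vy_0)$ has mean zero. Applying Lemma~\ref{lem:vbe} once to the whole sequence then gives
\[
\E\|\bm\delta_{g,k}\|^r \le C_r\Big[(1-\beta)^{rk}\E\|\bm\delta_{g,0}\|^r + \sum_i (1-\beta)^{r(k-i)}\,\E\big\|(1-\beta)\bm\zeta_i + \beta\bm\xi_{g,i}\big\|^r\Big].
\]
The combined increment $(1-\beta)\bm\zeta_i + \beta\bm\xi_{g,i}$ has a weight that sums to $1$, so convexity of $\|\cdot\|^r$ bounds its $r$-th moment by $(1-\beta)\E\|\bm\zeta_i\|^r + \beta\E\|\bm\xi_{g,i}\|^r$. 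This avoids an extra $2^{r-1}$. Using $\sum_i (1-\beta)^{r(k-i)} \le 1/\beta$, the two pieces contribute $2^r\bar L^r D_\cX^r\gamma^r/\beta$ and $\sigma_g^r\beta^{r-1}$ respectively.

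\textbf{Main obstacle: matching the constants exactly.} There are two issues. First, the $C_r$ prefactor on the initial term must be reconciled with the stated coefficient $1$. I would use $(1-\beta)^{rk} \le (1-\beta)^k$ and either assume $C_r \ge 1$ and absorb it, or else bound the initial term separately by the exact square-expansion at $r=2$. Second, the crude weighted-convexity step must not reintroduce powers of $\beta$ that destroy the $\beta^{r-1}$ scaling. If the single von Bahr--Esseen application turns out too lossy, I would fall back to splitting $\bm\delta_{g,k}$ into the initial term plus the two martingale sums. That fallback gives the same rates, with constants differing by a factor $3^{r-1}$.
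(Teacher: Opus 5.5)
There is a genuine gap in your main line. Your recursion identity, the cancellation of the drift and the bound $\E\|\bm\zeta_k\|^r\le 2^r\bar L^r\gamma^r D_\cX^r$ all match the paper. The failure is the weighted-convexity step, which destroys the variance reduction. The inequality $\E\|(1-\beta)\bm\zeta_i+\beta\bm\xi_{g,i}\|^r\le(1-\beta)\E\|\bm\zeta_i\|^r+\beta\E\|\bm\xi_{g,i}\|^r$ is true, but it weights the noise by $\beta$ rather than $\beta^r$. After summing against $\sum_i(1-\beta)^{r(k-i)}\le 1/\beta$, the noise therefore contributes $C_r\sigma_g^r$, not $C_r\sigma_g^r\beta^{r-1}$ as you claim. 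This does not vanish as $\beta\to0$, so the bound in Theorem~\ref{thm:storm_rate} would no longer go to zero.

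To keep the factor $\beta^r$, you must separate $(1-\beta)\bm\zeta_i$ and $\beta\bm\xi_{g,i}$ before taking $r$-th powers, e.g.\ via $\|\va+\vb\|^r\le 2^{r-1}(\|\va\|^r+\|\vb\|^r)$. Lemma~\ref{lem:vbe} cannot do this separation: both vectors are functions of the same sample $\xi_i$, so they do not form a martingale-difference pair.

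Your fix for the initial term also points the wrong way. A single application of Lemma~\ref{lem:vbe} to the whole unrolled sequence puts $C_r$ in front of $(1-\beta)^{rk}\E\|\bm\delta_{g,0}\|^r$. Absorbing this into coefficient $1$ for all $k$ and $\beta$ requires $C_r\le 1$ (let $\beta\to0$), whereas $C_r\ge1$ always (take a single summand). So $C_r\ge 1$ is the obstruction, not the remedy, except when $C_r=1$, as at $r=2$.

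The paper avoids the initial-term problem by conditioning one step at a time instead of unrolling globally. Since $(1-\beta)\bm\delta_{g,k-1}$ is $\mathcal F_{k-1}$-measurable and $\bm\epsilon_k:=(1-\beta)\bm\zeta_k+\beta\bm\xi_{g,k}$ is conditionally centered,
\[
\E\big[\|\bm\delta_{g,k}\|^r\,\big|\,\mathcal F_{k-1}\big]\le(1-\beta)^r\|\bm\delta_{g,k-1}\|^r+C_r\,\E\big[\|\bm\epsilon_k\|^r\,\big|\,\mathcal F_{k-1}\big].
\]
The measurable part keeps coefficient $1$; in Euclidean space this holds with $C_r=2^{2-r}$, from $\|\va+\vb\|^r\le\|\va\|^r+r\|\va\|^{r-2}\langle\va,\vb\rangle+2^{2-r}\|\vb\|^r$. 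Next bound $\E\|\bm\epsilon_k\|^r$ by multiples of $(1-\beta)^r\bar L^r\gamma^rD_\cX^r$ and $\beta^r\sigma_g^r$. Then $(1-\beta)^r\le1-\beta$ and iteration with $\sum_i(1-\beta)^{k-i}\le1/\beta$ produce the $(1-\beta)^k$, $\gamma^r/\beta$ and $\beta^{r-1}$ terms.

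Be aware that the paper's sketch separates $\bm\zeta_k$ from $\bm\xi_{g,k}$ with no extra factor, even though they share $\xi_k$. Done carefully, this route gives the stated bound with the last two terms multiplied by $2^{r-1}$, so chasing the exact constants is not worthwhile. What matters is the structure: coefficient $(1-\beta)^k$ on the initial error, and $\beta^r$ (not $\beta$) on the per-step noise. Your three-sum fallback is a correct argument with the right rates. However, it proves only a weaker bound, with an overall factor $3^{r-1}$ including on the initial term, not the lemma as stated.
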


\begin{proof}
Subtracting $\nabla\vf(\vy_k)$ from \eqref{eq:storm_grad}: $\bm\delta_{g,k} = (1-\beta)\bm\delta_{g,k-1} + (1-\beta)\bm\zeta_k + \beta\bm\xi_{g,k}$, where $\bm\zeta_k := [\nabla\tilde\vf(\vy_k; \xi_k) - \nabla\tilde\vf(\vy_{k-1}; \xi_k)] - [\nabla\vf(\vy_k) - \nabla\vf(\vy_{k-1})]$. Both $\bm\zeta_k$ and $\bm\xi_{g,k}$ are mean-zero conditional on $\mathcal{F}_{k-1}$. By Assumption~\ref{assump:avg_smooth}, $\E\|\bm\zeta_k\|^r \le 2^r \bar L^r \gamma^r D_\cX^r$. Conditioning and applying Lemma~\ref{lem:vbe} with $(1-\beta)^r \le 1-\beta$, then iterating, yields the claim.
\end{proof}

\begin{theorem}[STORM acceleration] \label{thm:storm_rate}
Under Assumptions~\ref{assump:domain}--\ref{assump:noise} and \ref{assump:avg_smooth}, with the STORM update \eqref{eq:storm_grad}, Variant II function tracker, and constant schedule $\gamma = \beta = \rho = K^{-r/(2r-1)}$:
\[
\min_{1 \le k \le K} \E[\hat\Delta_k] = \mathcal{O}\!\left(K^{-(r-1)/(2r-1)}\right).
\]
At $r = 2$, the rate is $\mathcal{O}(K^{-1/3})$.
\end{theorem}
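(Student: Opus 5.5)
The plan is to reuse the telescoping argument from the proof of Theorem~\ref{thm:nonconvex}. Summing Lemma~\ref{lem:glmo_main} over $k=0,\dots,K-1$ with constant $\gamma$ and dividing by $\gamma K$ gives \eqref{eq:telescope_main}:
\[
\min_k \E[\hat\Delta_k] \le \frac{\Phi_0}{\gamma K} + \frac{\gamma\mathcal{S}}{2} + \frac{2L_F}{K}\sum_{k}\big(\E\|\bm\delta_{f,k}\| + D_\cX\E\|\bm\delta_{g,k}\|\big).
\]
The only change is that the Jacobian-tracker bound now comes from Lemma~\ref{lem:storm_tracker} instead of Lemma~\ref{lem:grad_tracker_main}. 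With $\gamma=\beta$ its drift term is $2^rC_r\bar L^rD_\cX^r\gamma^r/\beta = O(\gamma^{r-1})$ and its noise term is $O(\beta^{r-1})$. Taking $r$-th roots via Jensen and Lemma~\ref{lem:subadd} gives
\[
\E\|\bm\delta_{g,k}\| \le \mathcal{E}_{g,0}(1-\beta)^{k/r} + O\big(\beta^{(r-1)/r}\big) = \mathcal{E}_{g,0}(1-\beta)^{k/r} + O\big(K^{-(r-1)/(2r-1)}\big).
\]
The initial term sums to $\sum_k(1-\beta)^{k/r}\le r/\beta$, so after dividing by $K$ it contributes $O(1/(K\beta))=O(K^{-(r-1)/(2r-1)})$.

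\textbf{Function tracker.} I would apply Lemma~\ref{lem:func_taylor_main} unchanged, because its proof only uses the algebraic recursion \eqref{eq:taylor_recursion_appendix} and a bound on $\max_i\E\|\bm\delta_{g,i}\|^r$, and neither depends on how $\mV_k$ is built. I plug in the uniform bound $\max_i \E\|\bm\delta_{g,i}\|^r \le \E\|\bm\delta_{g,0}\|^r + O(\beta^{r-1})$. The terms then behave as follows, with $\gamma=\rho=K^{-r/(2r-1)}$:
\begin{itemize}
\item The Taylor remainder term $\gamma^{2r}/\rho^r=\gamma^r$ is of lower order.
\item The coupling term $(\gamma^r/\rho^r)\max_i\E\|\bm\delta_{g,i}\|^r = \max_i\E\|\bm\delta_{g,i}\|^r$ is only $O(1)$ because of the initial error $\E\|\bm\delta_{g,0}\|^r$. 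This is the main obstacle.
\item The noise term $\rho^{r-1}$ gives the target rate after taking the $r$-th root.
\end{itemize}

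\textbf{Handling the coupling term.} I would redo Step 7 of the proof of Lemma~\ref{lem:func_taylor_main} with the actual weights rather than the max. This gives
\[
\E\|T_2\|^r \le \frac{\gamma^r D_\cX^r}{\rho^r}\sum_i w_i\,\E\|\bm\delta_{g,i}\|^r, \qquad w_i=\rho(1-\rho)^{k-i}.
\]
Substituting $\E\|\bm\delta_{g,i}\|^r \le (1-\beta)^i\E\|\bm\delta_{g,0}\|^r + O(\beta^{r-1})$ and using $\beta=\rho$, the initial-error piece becomes
\[
\rho\sum_i(1-\rho)^{k-i}(1-\rho)^i = k\rho(1-\rho)^k ,
\]
a transient. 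Its first moment is $(k\rho)^{1/r}(1-\rho)^{k/r}$, and summing over $k$ gives $O(1/\rho)$, so it contributes $O(1/(K\rho))$ to the average, which is still $O(K^{-(r-1)/(2r-1)})$. The noise piece gives $O(\rho^{(r-1)/r})$.

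\textbf{Balancing.} The remaining terms in \eqref{eq:telescope_main} are:
\begin{itemize}
\item $\Phi_0/(\gamma K)=K^{-(r-1)/(2r-1)}$;
\item $\gamma\mathcal{S}/2=O(K^{-r/(2r-1)})$, which is faster;
\item the tracker averages, each $O(K^{-(r-1)/(2r-1)})$ by the above.
\end{itemize}
Collecting constants as in \eqref{eq:MII_explicit} gives $\min_k\E[\hat\Delta_k]=O(K^{-(r-1)/(2r-1)})$, and setting $r=2$ yields $K^{-1/3}$. Two details need checking: that $\|\vy_k-\vy_{k-1}\|\le\gamma D_\cX$ holds, which follows from the update rule, and that the transient sums are handled with the $1/r$ exponent, which costs only a factor $r$.
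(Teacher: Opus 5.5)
Your proof is correct and has the same skeleton as the paper's. You telescope Lemma~\ref{lem:glmo_main} into \eqref{eq:telescope_main}, take $r$-th roots of Lemma~\ref{lem:storm_tracker} (drift $\gamma\beta^{-1/r}$ and noise $\beta^{(r-1)/r}$, both of order $K^{-(r-1)/(2r-1)}$), and pass the Jacobian bound through the Taylor tracker.

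Where you differ is the cross-coupling term, and there your version is the one that closes the argument. The paper only says that the Taylor tracker ``inherits this rate via its cross-coupling term,'' i.e.\ it plugs the STORM bound into Lemma~\ref{lem:func_taylor_main} as stated. But with $\gamma=\rho$ the prefactor $\gamma^r/\rho^r$ equals $1$, and $\max_{1\le i\le k}\E\|\bm\delta_{g,i}\|^r$ still contains $(1-\beta)\E\|\bm\delta_{g,0}\|^r$, which does not decay in $K$. This is the term $\tfrac{\gamma}{\rho}\mathcal{E}_{g,0}$ of \eqref{eq:func_taylor_first}; it is harmless in Theorem~\ref{thm:nonconvex} only because there $\gamma/\rho\to 0$.

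Keeping the weights $w_i=\rho(1-\rho)^{k-i}$ in Step~7 and using $\beta=\rho$ turns this piece into the transient $k\rho(1-\rho)^k\,\E\|\bm\delta_{g,0}\|^r$. Since $(k\rho(1-\rho)^k)^{1/r}\le (2/e)^{1/r}e^{-k\rho/(2r)}$, its first-moment contribution sums to $O(r/\rho)$. After averaging it adds only $O(1/(K\rho))=O(K^{-(r-1)/(2r-1)})$, as you claim.

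Your route therefore proves the stated rate without needing a large initial batch to shrink $\E\|\bm\delta_{g,0}\|^r$. The paper's one-line argument, taken literally, leaves an $O(1)$ term.

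You also correctly account for the fact that Lemma~\ref{lem:storm_tracker} decays as $(1-\beta)^k$ rather than $(1-\beta)^{rk}$. The roots therefore give $(1-\beta)^{k/r}$, and the transient sum is $O(r/\beta)$ rather than $1/\beta$, a detail the paper's sketch also skips.
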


\begin{proof}
By Jensen on Lemma~\ref{lem:storm_tracker}: the drift $(\gamma^r/\beta)^{1/r} = \gamma\beta^{-1/r}$ and noise $\beta^{(r-1)/r}$ both scale as $K^{-(r-1)/(2r-1)}$ under the schedule. The Taylor function tracker (Lemma~\ref{lem:func_taylor_main}) inherits this rate via its cross-coupling term. Substituting into the GLMO progress bound and dividing by $\gamma K$, the initialization gap $1/(\gamma K) = K^{-(r-1)/(2r-1)}$ matches; the curvature term $\gamma$ decays faster.
\end{proof}

\section{Hessian-Corrected Momentum and the Average-Smoothness Equivalence} \label{sec:hessian_extension}

We now present an alternative acceleration mechanism via a randomized Hessian--vector correction, and prove that bounded Hessian-noise moments imply $r$-average smoothness with explicit constants.

\subsection{Bounded Hessian Noise Implies Average Smoothness}

\begin{assumption}[Bounded $r$-th moment of stochastic Hessian] \label{assump:hessian_var}
There exist $\sigma_H \ge 0$ and $r \in (1,2]$ such that $\tilde\vf(\cdot; \xi)$ is twice differentiable on $\cX$ a.s., with $\E_\xi\|\nabla^2\tilde\vf(\vx; \xi) - \nabla^2\vf(\vx)\|_{\mathrm{op}}^r \le \sigma_H^r$ for all $\vx \in \cX$.
\end{assumption}

\begin{proposition}[Hessian noise implies $r$-average smoothness] \label{prop:hessian_implies_avg}
Under Assumption~\ref{assump:inner_smooth} (which implies $\|\nabla^2\vf(\vx)\|_{\mathrm{op}} \le L$ when $\vf$ is $C^2$) and Assumption~\ref{assump:hessian_var}, Assumption~\ref{assump:avg_smooth} holds with $\bar L^r = 2^{r-1}(\sigma_H^r + L^r)$.
\end{proposition}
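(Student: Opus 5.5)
Fix $\vx, \vy \in \cX$ and a realization $\xi$, and write the stochastic Jacobian difference as an integral of the stochastic Hessian along the segment. This is allowed because $\cX$ is convex and $\tilde\vf(\cdot;\xi)$ is a.s. twice differentiable by Assumption~\ref{assump:hessian_var}. Setting $\vx_t := \vy + t(\vx - \vy)$, we have
\[
\nabla\tilde\vf(\vx;\xi) - \nabla\tilde\vf(\vy;\xi) = \int_0^1 \nabla^2\tilde\vf(\vx_t;\xi)[\vx - \vy]\,dt .
\]
We then split the integrand as $\nabla^2\tilde\vf(\vx_t;\xi) = \big(\nabla^2\tilde\vf(\vx_t;\xi) - \nabla^2\vf(\vx_t)\big) + \nabla^2\vf(\vx_t)$. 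This gives
\[
\|\nabla\tilde\vf(\vx;\xi) - \nabla\tilde\vf(\vy;\xi)\| \le \|\vx - \vy\|\int_0^1 \big(a_t(\xi) + b_t\big)\,dt,
\]
where $a_t(\xi) := \|\nabla^2\tilde\vf(\vx_t;\xi) - \nabla^2\vf(\vx_t)\|_{\mathrm{op}}$ and $b_t := \|\nabla^2\vf(\vx_t)\|_{\mathrm{op}} \le L$. The bound $b_t \le L$ is the parenthetical remark in the statement, which follows from Assumption~\ref{assump:inner_smooth} by differentiating the Lipschitz Jacobian.

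Next, raise both sides to the power $r$. By Lemma~\ref{lem:norm_convex} with $m = 2$, $\big(\int_0^1 (a_t + b_t)\,dt\big)^r \le 2^{r-1}\big[(\int_0^1 a_t\,dt)^r + (\int_0^1 b_t\,dt)^r\big]$. For the first term, apply Jensen (the continuous analogue of Lemma~\ref{lem:jensen} on the probability measure $dt$) to get $(\int_0^1 a_t\,dt)^r \le \int_0^1 a_t^r\,dt$. The second term is at most $L^r$.

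Finally, take expectation over $\xi$ and use Tonelli to swap $\E_\xi$ with $\int_0^1$, so that Assumption~\ref{assump:hessian_var} gives $\E_\xi a_t^r \le \sigma_H^r$ pointwise in $t$. This yields
\[
\E_\xi\|\nabla\tilde\vf(\vx;\xi) - \nabla\tilde\vf(\vy;\xi)\|^r \le 2^{r-1}(\sigma_H^r + L^r)\|\vx - \vy\|^r,
\]
which is Assumption~\ref{assump:avg_smooth} with $\bar L^r = 2^{r-1}(\sigma_H^r + L^r)$.

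The main obstacle is the bookkeeping of norms rather than any analytic difficulty. The Jacobian differences are measured in Frobenius norm while the Hessian noise is in operator norm, so one must make sure that $\|\nabla^2\tilde\vf(\cdot)[\vv]\|$ is bounded by the operator norm times $\|\vv\|$ under the paper's tensor convention; I would simply adopt the operator norm as the induced norm for that bilinear map. The other point needing care is measurability of $t \mapsto a_t(\xi)$ to justify Tonelli. Continuity of the Hessians would give this, and otherwise one can argue via the a.s. twice differentiability.
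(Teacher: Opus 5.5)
Your proof is correct and follows essentially the same argument as the paper: the integral representation of the Jacobian difference along the segment, Jensen on the uniform measure on $[0,1]$, the $2^{r-1}$ split into noise and mean Hessian, and Fubini/Tonelli to apply Assumption~\ref{assump:hessian_var} pointwise in $t$. The only difference is the order of steps, and it gives the same constant: the paper first bounds $\E_\xi\|\nabla^2\tilde\vf(\vx;\xi)\|_{\mathrm{op}}^r \le \bar L^r$ pointwise and then applies Jensen to the whole integrand, whereas you split inside the integral and apply Jensen only to the noise part.
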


\begin{proof}
\emph{Step 1.} Using $\|\vA\|^r \le 2^{r-1}(\|\vA - \vB\|^r + \|\vB\|^r)$:
\[
\E_\xi\|\nabla^2\tilde\vf(\vx; \xi)\|_{\mathrm{op}}^r \le 2^{r-1}\sigma_H^r + 2^{r-1} L^r =: \bar L^r.
\]

\emph{Step 2.} The mean-value theorem gives, for each $\xi$ a.s.,
\[
\nabla\tilde\vf(\vx;\xi) - \nabla\tilde\vf(\vy;\xi) = \int_0^1 \nabla^2\tilde\vf(\vy + t(\vx - \vy); \xi)[\vx - \vy]\,dt.
\]

\emph{Step 3.} Taking the $r$-th power and applying Jensen on the uniform measure on $[0,1]$ (since $s \mapsto s^r$ is convex):
\[
\|\nabla\tilde\vf(\vx;\xi) - \nabla\tilde\vf(\vy;\xi)\|^r \le \int_0^1 \|\nabla^2\tilde\vf(\vy + t(\vx - \vy); \xi)\|_{\mathrm{op}}^r\,dt \cdot \|\vx - \vy\|^r.
\]

\emph{Step 4.} Taking expectation and applying Fubini:
\[
\E_\xi\|\nabla\tilde\vf(\vx; \xi) - \nabla\tilde\vf(\vy; \xi)\|^r \le \int_0^1 \E_\xi\|\nabla^2\tilde\vf(\cdot; \xi)\|_{\mathrm{op}}^r\,dt \cdot \|\vx - \vy\|^r \le \bar L^r \|\vx - \vy\|^r.\qedhere
\]
\end{proof}

\subsection{Hessian-Corrected Algorithm and Convergence}

Let $\alpha_k \sim \mathrm{U}(0,1)$ and $\zeta_k$ be drawn independently of $\xi_k$ from the Hessian oracle. Define $\vy_{\alpha,k} := \vy_{k-1} + \alpha_k(\vy_k - \vy_{k-1})$ and $\tilde\mH_k := \nabla^2\tilde\vf(\vy_{\alpha,k}; \zeta_k)$. The Hessian-corrected Jacobian update is
\begin{equation} \label{eq:hessian_grad}
\mV_k = (1-\beta)\mV_{k-1} + \beta\nabla\tilde\vf(\vy_k; \xi_k) + (1-\beta)\tilde\mH_k(\vy_k - \vy_{k-1}).
\end{equation}

\begin{lemma}[Unbiasedness of Hessian--vector correction] \label{lem:hessian_unbiased}
Conditional on $\mathcal{F}_{k-1}$: $\E[\tilde\mH_k(\vy_k - \vy_{k-1}) | \mathcal{F}_{k-1}] = \nabla\vf(\vy_k) - \nabla\vf(\vy_{k-1})$.
\end{lemma}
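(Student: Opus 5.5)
We condition on $\mathcal{F}_{k-1}$, the $\sigma$-algebra generated by all randomness up to and including the computation of $\vy_k$. Under this conditioning both $\vy_{k-1}$ and $\vy_k$ are $\mathcal{F}_{k-1}$-measurable, because $\vy_k = (1-\gamma_{k-1})\vy_{k-1} + \gamma_{k-1}\vx_k$ depends only on $\xi_0,\dots,\xi_{k-1}$ and earlier Hessian draws. So the displacement $\vu_k := \vy_k - \vy_{k-1}$ is a fixed vector under the conditioning. The only fresh randomness in $\tilde\mH_k \vu_k$ is the pair $(\alpha_k, \zeta_k)$, and by construction this pair is independent of $\mathcal{F}_{k-1}$ and of each other.

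The computation then uses the tower property in two layers. First condition additionally on $\alpha_k$. Since $\zeta_k$ is independent of $\alpha_k$ and of $\mathcal{F}_{k-1}$, and the Hessian oracle is unbiased at every point of $\cX$ (the point $\vy_{\alpha,k}$ lies in $\cX$ by convexity), we get
\[
\E[\tilde\mH_k \vu_k \mid \mathcal{F}_{k-1}, \alpha_k] = \nabla^2 \vf(\vy_{\alpha,k})\,\vu_k .
\]
This step needs unbiasedness of the stochastic Hessian, $\E_\zeta \nabla^2\tilde\vf(\vx;\zeta) = \nabla^2\vf(\vx)$. That property is implicit in Assumption~\ref{assump:hessian_var}, which measures deviation from $\nabla^2\vf$. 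I will state it explicitly, or derive it by exchanging differentiation and expectation, which is justified by the integrable $r$-th moment bound together with a.s.\ twice differentiability.

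Next, integrate over $\alpha_k \sim \mathrm{U}(0,1)$:
\[
\E_{\alpha}\big[\nabla^2\vf(\vy_{k-1}+\alpha\vu_k)\vu_k\big] = \int_0^1 \nabla^2\vf(\vy_{k-1}+t\vu_k)\vu_k\,dt = \nabla\vf(\vy_k)-\nabla\vf(\vy_{k-1}).
\]
The last equality is the fundamental theorem of calculus applied to $t \mapsto \nabla\vf(\vy_{k-1}+t\vu_k)$, read componentwise for each $f_i$. The tensor contraction $\nabla^2\vf[\vu]$ is interpreted as the $n\times d$ matrix whose $i$-th row is $(\nabla^2 f_i\,\vu)^\top$. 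This is the same identity already used in Step~2 of Proposition~\ref{prop:hessian_implies_avg}, now for $\vf$ in place of $\tilde\vf$.

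The main obstacle is purely technical: justifying the interchange of the conditional expectation with the $t$-integral (Fubini) and confirming integrability. For this I will use Assumption~\ref{assump:hessian_var} together with $\|\nabla^2\vf\|_{\mathrm{op}}\le L$. These give $\E\|\tilde\mH_k\|_{\mathrm{op}} \le (\E\|\tilde\mH_k\|_{\mathrm{op}}^r)^{1/r} \le \bar L < \infty$, and $\|\vu_k\|\le \gamma D_\cX$ is bounded. The integrand is therefore absolutely integrable on $[0,1]\times\Omega$, which licenses swapping the order of integration and completes the proof.
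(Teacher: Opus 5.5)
Your proposal is correct and follows the paper's proof. Like the paper, you condition additionally on $\alpha_k$, use unbiasedness of the stochastic Hessian to get $\E[\tilde\mH_k \mid \mathcal{F}_{k-1},\alpha_k] = \nabla^2\vf(\vy_{\alpha,k})$, then integrate over $\alpha_k\sim\mathrm{U}(0,1)$ and apply the fundamental theorem of calculus. The paper also uses Hessian unbiasedness without listing it in Assumption~\ref{assump:hessian_var}, so flagging it explicitly and adding the Fubini/integrability justification refine the paper's argument rather than depart from it.
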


\begin{proof}
By the fundamental theorem of calculus, $\nabla\vf(\vy_k) - \nabla\vf(\vy_{k-1}) = \E_\alpha[\nabla^2\vf(\vy_{\alpha,k})](\vy_k - \vy_{k-1})$. By unbiasedness of the stochastic Hessian and conditional independence of $\alpha_k, \zeta_k$:
\[
\E[\tilde\mH_k | \mathcal{F}_{k-1}, \alpha_k] = \nabla^2\vf(\vy_{\alpha,k}),
\]
and taking iterated expectation completes the proof.
\end{proof}

\begin{lemma}[Hessian-corrected Jacobian tracking error] \label{lem:hessian_track}
Under Assumptions~\ref{assump:inner_smooth}, \ref{assump:noise}, \ref{assump:hessian_var} with $\xi_k \perp \zeta_k | \mathcal{F}_{k-1}$ and $\|\vy_k - \vy_{k-1}\| \le \gamma D_\cX$:
\[
\E\|\bm\delta_{g,k}\|^r \le (1-\beta)^k \E\|\bm\delta_{g,0}\|^r + \frac{2^{r-1} C_r \bar L^r D_\cX^r \gamma^r}{\beta} + C_r \sigma_g^r \beta^{r-1},
\]
where $\bar L^r = 2^{r-1}(\sigma_H^r + L^r)$.
\end{lemma}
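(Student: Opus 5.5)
The plan mirrors the proof of Lemma~\ref{lem:storm_tracker}, with the finite-difference correction replaced by the randomized Hessian--vector term. Subtracting $\nabla\vf(\vy_k)$ from \eqref{eq:hessian_grad} and writing $\nabla\vf(\vy_k) = (1-\beta)\nabla\vf(\vy_k) + \beta\nabla\vf(\vy_k)$, I will derive the recursion
\[
\bm\delta_{g,k} = (1-\beta)\bm\delta_{g,k-1} + (1-\beta)\bm\eta_k + \beta\bm\xi_{g,k},
\]
where $\bm\eta_k := \tilde\mH_k(\vy_k - \vy_{k-1}) - [\nabla\vf(\vy_k) - \nabla\vf(\vy_{k-1})]$ and $\bm\xi_{g,k} := \nabla\tilde\vf(\vy_k;\xi_k) - \nabla\vf(\vy_k)$. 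By Lemma~\ref{lem:hessian_unbiased}, $\E[\bm\eta_k \mid \mathcal{F}_{k-1}] = \mathbf{0}$. By Assumption~\ref{assump:noise}, $\E[\bm\xi_{g,k}\mid\mathcal{F}_{k-1}] = \mathbf{0}$. Here $\vy_k$ is $\mathcal{F}_{k-1}$-measurable, since it is determined by $\vx_k$.

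Next I bound the moments of the correction term. Using $\|\tilde\mH_k(\vy_k-\vy_{k-1})\| \le \|\tilde\mH_k\|_{\mathrm{op}}\gamma D_\cX$ together with Step~1 of Proposition~\ref{prop:hessian_implies_avg}, conditioned on $\alpha_k$, gives $\E\|\tilde\mH_k(\vy_k-\vy_{k-1})\|^r \le \bar L^r\gamma^r D_\cX^r$. The deterministic part $\nabla\vf(\vy_k)-\nabla\vf(\vy_{k-1})$ has norm at most $L\gamma D_\cX \le \bar L\gamma D_\cX$, and $r$-th moments of centered variables cost at most a factor $2^{r-1}\cdot 2$. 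A cleaner route is to note that $\bm\eta_k$ is the centering of a random variable $Y$, so $\E\|Y - \E Y\|^r \le 2^{r-1}(\E\|Y\|^r + \|\E Y\|^r) \le 2^r \E\|Y\|^r$. I will aim for $\E\|\bm\eta_k\|^r \le 2^{r-1}\bar L^r\gamma^r D_\cX^r$ after absorbing constants. If the constant comes out as $2^r$ instead, I will accept a harmless constant adjustment.

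Then I unroll the recursion, as in Lemma~\ref{lem:grad_tracker_main}, to get
\[
\bm\delta_{g,k} = (1-\beta)^k\bm\delta_{g,0} + \sum_{i=1}^k (1-\beta)^{k-i}\big[(1-\beta)\bm\eta_i + \beta\bm\xi_{g,i}\big].
\]
The sum is a martingale difference sequence with respect to $\mathcal{F}_i$. To avoid the $3^{r-1}$ factor and obtain the stated form without a prefactor, I will not split with Lemma~\ref{lem:norm_convex}. Instead I condition on $\mathcal{F}_{k-1}$ in the one-step recursion and apply von Bahr--Esseen (Lemma~\ref{lem:vbe}) to the two-term sum consisting of the $\mathcal{F}_{k-1}$-measurable part $(1-\beta)\bm\delta_{g,k-1}$ and the conditionally mean-zero increment. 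This gives
\[
\E\|\bm\delta_{g,k}\|^r \le (1-\beta)^r\E\|\bm\delta_{g,k-1}\|^r + C_r\E\|(1-\beta)\bm\eta_k+\beta\bm\xi_{g,k}\|^r.
\]
For the increment, the independence $\xi_k\perp\zeta_k$ given $\mathcal{F}_{k-1}$ (with $\alpha_k$ independent as well) makes $\bm\eta_k$ and $\bm\xi_{g,k}$ conditionally independent and mean-zero. The vBE inequality therefore applies again and yields the sum $C_r[(1-\beta)^r\E\|\bm\eta_k\|^r + \beta^r\sigma_g^r]$, up to a $C_r$ factor that I will track loosely. Using $(1-\beta)^r\le 1-\beta$ and iterating, the geometric sums give $\sum_i(1-\beta)^{k-i}\le 1/\beta$. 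This produces the drift $2^{r-1}C_r\bar L^r D_\cX^r\gamma^r/\beta$ and the noise term $C_r\sigma_g^r\beta^r/\beta = C_r\sigma_g^r\beta^{r-1}$.

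The main obstacle is the first von Bahr--Esseen step: the inequality $\E\|\va+\vX\|^r\le\|\va\|^r + C_r\E\|\vX\|^r$ for measurable $\va$ and mean-zero $\vX$ requires $C_r$ multiplying $\|\va\|^r$ as well, unless $r=2$. I will handle this as the paper does in Lemma~\ref{lem:storm_tracker}, by invoking Lemma~\ref{lem:vbe} in the martingale (unrolled) form and folding the constants into $C_r$. The other point needing care is justifying that the conditional moment of $\tilde\mH_k$ at the random point $\vy_{\alpha,k}$ is bounded by $\bar L^r$, which follows from conditioning on $(\mathcal{F}_{k-1},\alpha_k)$ and applying Fubini.
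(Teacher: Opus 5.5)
Your proposal is correct and is essentially the paper's proof. It uses the same decomposition $\bm\delta_{g,k}=(1-\beta)\bm\delta_{g,k-1}+(1-\beta)\bm\eta_k+\beta\bm\xi_{g,k}$, a centering bound on the Hessian--vector error, a one-step conditional von Bahr--Esseen bound with $(1-\beta)^r\le 1-\beta$, and geometric unrolling. Your hedge on the centering constant is warranted, since $\E\|\vV-\E\vV\|^r\le 2^{r-1}\E\|\vV\|^r$ fails for $r$ near $1$ (Bernoulli $\vV$), but the extra factor is harmless: centering is free at $r=2$, and for $r<2$ it folds into $C_r$.

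The obstacle you flag is not real, so you can skip the unrolled fallback, which would leave a stray $C_r$ on the initialization term. In $\R^n$, the $(r-1)$-H\"older continuity of $\nabla\|\cdot\|^r$ gives $\E[\|\va+\vX\|^r\mid\mathcal{G}]\le\|\va\|^r+2^{2-r}\E[\|\vX\|^r\mid\mathcal{G}]$ for $\mathcal{G}$-measurable $\va$ and conditionally mean-zero $\vX$. Apply it once to peel off $(1-\beta)\bm\eta_k$ given $\sigma(\mathcal{F}_{k-1},\xi_k)$ (this is where the conditional independence enters), and once to peel off $\beta\bm\xi_{g,k}$ given $\mathcal{F}_{k-1}$. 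This yields exactly the unit-coefficient recursion of the paper's Step~3, with no $C_r^2$.
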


\begin{proof}
\emph{Step 1: Decomposition.} Subtracting $\nabla\vf(\vy_k)$ from \eqref{eq:hessian_grad}:
\[
\bm\delta_{g,k} = (1-\beta)\bm\delta_{g,k-1} + (1-\beta)\bm\zeta_{H,k} + \beta\bm\xi_{g,k},
\]
where $\bm\zeta_{H,k} := \tilde\mH_k(\vy_k - \vy_{k-1}) - (\nabla\vf(\vy_k) - \nabla\vf(\vy_{k-1}))$. By Lemma~\ref{lem:hessian_unbiased}, $\E[\bm\zeta_{H,k} | \mathcal{F}_{k-1}] = \mathbf{0}$. By Assumption~\ref{assump:noise}, $\E[\bm\xi_{g,k} | \mathcal{F}_{k-1}] = \mathbf{0}$. Both are martingale differences, conditionally independent.

\emph{Step 2: Moment bound on $\bm\zeta_{H,k}$.} By Step 1 of the proof of Proposition~\ref{prop:hessian_implies_avg}, $\E\|\nabla^2\tilde\vf(\vx;\zeta)\|_{\mathrm{op}}^r \le \bar L^r$. Hence $\E\|\tilde\mH_k(\vy_k - \vy_{k-1})\|^r \le \bar L^r \gamma^r D_\cX^r$. Since $\bm\zeta_{H,k}$ is a centering of $\tilde\mH_k(\vy_k - \vy_{k-1})$, the standard centering inequality $\E\|\vV - \E\vV\|^r \le 2^{r-1}\E\|\vV\|^r$ gives $\E\|\bm\zeta_{H,k}\|^r \le 2^{r-1}\bar L^r \gamma^r D_\cX^r$.

\emph{Step 3: Conditional von Bahr--Esseen.} Using conditional independence and $(1-\beta)^r \le 1-\beta$:
\[
\E\|\bm\delta_{g,k}\|^r \le (1-\beta)\E\|\bm\delta_{g,k-1}\|^r + 2^{r-1} C_r (1-\beta)\bar L^r \gamma^r D_\cX^r + C_r \beta^r \sigma_g^r.
\]

\emph{Step 4: Iterate.} Unrolling, with $\sum_i (1-\beta)^{k-i} \le 1/\beta$, gives the claim.
\end{proof}

\begin{theorem}[Hessian-corrected acceleration] \label{thm:hessian_rate}
Under Assumptions~\ref{assump:domain}--\ref{assump:noise} and \ref{assump:hessian_var}, with the update \eqref{eq:hessian_grad}, Variant II function tracker, and constant schedule $\gamma = \beta = \rho = K^{-r/(2r-1)}$:
\[
\min_{1 \le k \le K}\E[\hat\Delta_k] = \mathcal{O}\!\left(K^{-(r-1)/(2r-1)}\right).
\]
\end{theorem}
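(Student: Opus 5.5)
The proof follows the template of Theorem~\ref{thm:storm_rate}, with Lemma~\ref{lem:hessian_track} replacing Lemma~\ref{lem:storm_tracker}. The first step converts the $r$-th moment bound of Lemma~\ref{lem:hessian_track} into a first-moment bound via Jensen ($\E X \le (\E X^r)^{1/r}$) and Lemma~\ref{lem:subadd}:
\[
\E\|\bm\delta_{g,k}\| \le (1-\beta)^{k/r}\big(\E\|\bm\delta_{g,0}\|^r\big)^{1/r} + 2^{(r-1)/r} C_r^{1/r}\bar L D_\cX \,\gamma\beta^{-1/r} + C_r^{1/r}\sigma_g \beta^{(r-1)/r},
\]
where $\bar L^r = 2^{r-1}(\sigma_H^r+L^r)$. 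With $\gamma=\beta=K^{-r/(2r-1)}$, one has $\gamma\beta^{-1/r} = K^{-(r-1)/(2r-1)}$ and $\beta^{(r-1)/r} = K^{-(r-1)/(2r-1)}$. So after the transient, the Jacobian tracker error is $\mathcal{O}(K^{-(r-1)/(2r-1)})$ uniformly in $k$. Writing $\sup_k \E\|\bm\delta_{g,k}\|^r$ for the corresponding $r$-th moment supremum, its order is $K^{-(r-1)r/(2r-1)}$ plus the initialization term.

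The second step feeds this into the Taylor function tracker bound, Lemma~\ref{lem:func_taylor_main}. Its proof only uses the recursion of Variant II together with whatever $\mV_k$ is, so it applies verbatim to the Hessian-corrected $\mV_k$. The terms behave as follows.
\begin{itemize}
\item The noise term $\rho^{(r-1)/r}$ is $K^{-(r-1)/(2r-1)}$.
\item The remainder term $\gamma^2/\rho = \gamma$ decays faster.
\item The cross term is $(\gamma/\rho)\max_i(\E\|\bm\delta_{g,i}\|^r)^{1/r} = \max_i(\E\|\bm\delta_{g,i}\|^r)^{1/r}$.
\end{itemize}
The cross term inherits the Jacobian rate. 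The initialization parts $(1-\beta)^{k}\E\|\bm\delta_{g,0}\|^r$ enter only additively inside this maximum, so they contribute a bounded constant times $\gamma/\rho = 1$. This is the one spot that needs care (the obstacle discussed below). Elsewhere, the geometric transients $\sum_k(1-\beta)^{k/r}\le r/\beta$ summed and divided by $K$ give $1/(K\beta)=K^{-(r-1)/(2r-1)}$.

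The third step sums Lemma~\ref{lem:glmo_main} over $k=0,\dots,K-1$ exactly as in \eqref{eq:telescope_main} and divides by $\gamma K$. The resulting terms are:
\begin{itemize}
\item $\Phi_0/(\gamma K) = \Phi_0 K^{-(r-1)/(2r-1)}$;
\item the curvature term $\gamma\mathcal{S}/2 = \mathcal{O}(K^{-r/(2r-1)})$, which decays faster, with $\mathcal{S}$ bounded by Proposition~\ref{prop:curvature};
\item the averaged tracking errors $\frac{2L_F}{K}\sum_k(\E\|\bm\delta_{f,k}\| + D_\cX\E\|\bm\delta_{g,k}\|)$, which are $\mathcal{O}(K^{-(r-1)/(2r-1)})$ by the first two steps.
\end{itemize}
Collecting constants gives the claim. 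The constants involve $\Phi_0$, $L_FLD_\cX^2\sqrt n$, $L_F\bar L D_\cX^2$, $L_F D_\cX\sigma_g$, $L_F\sigma_f$ and the initial errors.

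The main obstacle is the cross-coupling term in the second step. Because $\gamma/\rho=1$ under this schedule, the cross term does not shrink the Jacobian transient, which is $\mathcal{O}(1)$ at early $k$. A crude $\max_{i}$ bound would therefore leave a non-vanishing constant. I would fix this by not taking the max in Step~7 of the proof of Lemma~\ref{lem:func_taylor_main}. Instead I keep the weighted sum $\sum_i w_i \E\|\bm\delta_{g,i}\|^r$ with $w_i=\rho(1-\rho)^{k-i}$, and substitute $\E\|\bm\delta_{g,i}\|^r \le (1-\beta)^i\E\|\bm\delta_{g,0}\|^r + (\text{stationary part})$. Since $\beta=\rho$, the transient contribution is $\rho\,k(1-\rho)^{k}\E\|\bm\delta_{g,0}\|^r$. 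Summed over $k$ it is at most $\E\|\bm\delta_{g,0}\|^r/\rho$, so after the $r$-th root and averaging over $K$ it is again $\mathcal{O}(1/(K\beta^{\cdot}))$, up to Jensen on the $1/r$ power across $k$. Alternatively, one can initialize with a mini-batch so that $\E\|\bm\delta_{g,0}\|^r$ is small; I would try the weighted-sum route first.
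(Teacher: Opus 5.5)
Your three steps are the paper's argument: Jensen on Lemma~\ref{lem:hessian_track}, cross-coupling into the Variant~II tracker, then telescoping Lemma~\ref{lem:glmo_main} and dividing by $\gamma K$. The obstacle you flag is real, and the paper's proof does not address it. The paper only asserts that the function tracker ``inherits this rate via cross-coupling''. But with $\gamma/\rho=1$, the $\max_{1\le i\le k}$ in Lemma~\ref{lem:func_taylor_main} puts a term of order $D_\cX(\E\|\bm\delta_{g,0}\|^r)^{1/r}$ into every $\E\|\bm\delta_{f,k}\|$. That term does not decay in $k$ or $K$, so it survives the averaging in the telescoped bound. Keeping the weighted sum $\sum_i w_i\E\|\bm\delta_{g,i}\|^r$ is the right repair, and your transient $\rho k(1-\rho)^k\E\|\bm\delta_{g,0}\|^r$ is computed correctly.

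Where your argument breaks is the closing step.

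\emph{The Jensen route loses the rate.} Suppose you first sum the transient over $k$ (getting at most $\E\|\bm\delta_{g,0}\|^r/\rho$) and then pull the $1/r$ power outside the average by Jensen. You get $\frac1K\sum_k\big(\rho k(1-\rho)^k\E\|\bm\delta_{g,0}\|^r\big)^{1/r}\le\big(\E\|\bm\delta_{g,0}\|^r/(K\rho)\big)^{1/r}$. Since $K\rho=K^{(r-1)/(2r-1)}$, this is only $\mathcal{O}(K^{-(r-1)/(r(2r-1))})$, e.g.\ $K^{-1/6}$ rather than $K^{-1/3}$ at $r=2$.

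\emph{Taking the root termwise fixes it.} Since $\rho k(1-\rho)^{k/2}\le \rho k e^{-\rho k/2}\le 2/e$, you have $\rho k(1-\rho)^k\le \tfrac{2}{e}(1-\rho)^{k/2}$, so the transient is still geometric. Its $r$-th root is at most $(2/e)^{1/r}(1-\rho)^{k/(2r)}$, and $\sum_{k\ge 0}(1-\rho)^{k/(2r)}\le 2r/\rho$. The contribution to the averaged tracking error is therefore $\mathcal{O}\big((\E\|\bm\delta_{g,0}\|^r)^{1/r}/(K\rho)\big)=\mathcal{O}(K^{-(r-1)/(2r-1)})$, as required. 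Alternatively, with $\gamma=\rho$ you can bound $\E\|T_2\|\le D_\cX\sum_i w_i\E\|\bm\delta_{g,i}\|$ directly at the first-moment level and run the same termwise estimate.

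\emph{The mini-batch fallback changes the algorithm.} It would also remove the constant, but it needs an initial batch of size of order $K^{r/(2r-1)}$. That modifies the single-sample initialization of Algorithm~\ref{alg:hybrid_sfw}, so it does not prove the statement as written.
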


\begin{proof}
By Jensen on Lemma~\ref{lem:hessian_track}, the drift $\gamma\beta^{-1/r}$ and noise $\beta^{(r-1)/r}$ both scale as $K^{-(r-1)/(2r-1)}$. The function tracker inherits this rate via cross-coupling. Telescoping the GLMO progress bound and dividing by $\gamma K$ yields the result.
\end{proof}

\begin{remark}[STORM vs.\ Hessian-corrected]
Both Theorems~\ref{thm:storm_rate} and~\ref{thm:hessian_rate} achieve the rate $\mathcal{O}(K^{-(r-1)/(2r-1)})$ at $r = 2$. STORM reuses $\xi_k$ at two points; the Hessian variant requires an independent Hessian oracle. Proposition~\ref{prop:hessian_implies_avg} shows the latter is strictly stronger, since bounded Hessian moments imply average smoothness. The lower bound of \citet{arjevani2023lower} confirms that the $K^{-1/4}$ rate of Theorem~\ref{thm:nonconvex} is minimax-optimal under Assumption~\ref{assump:inner_smooth} alone; Theorems~\ref{thm:storm_rate} and~\ref{thm:hessian_rate} circumvent this only via the strictly stronger smoothness assumption.
\end{remark}

\section{Additional Examples} \label{sec:examples_appendix}

\begin{example}[Fairness-constrained learning]
Beyond the examples in the main text, Algorithm~\ref{alg:hybrid_sfw} applies to fairness-constrained ERM \citep{agarwal2018reductions, donini2018empirical}, where one minimizes
\[
\min_{\vx \in \cX} f_0(\vx) + \lambda \max_{1 \le i \le n}|f_i(\vx) - f_0(\vx)| + \Xi(\vx).
\]
The outer function $F(\vu, \vx) = u^{(0)} + \lambda\max_i|u^{(i)} - u^{(0)}| + \Xi(\vx)$ depends explicitly on $\vx$. The GLMO reduces to an LP via auxiliary variables for the absolute values.
\end{example}

\begin{example}[Exact penalty for stochastic constraints]
For safe RL or fairness-constrained classification, one can minimize an expected loss subject to expectation constraints via the exact penalty formulation \citep{cotter2019two}:
\[
\min_{\vx \in \cX} f_0(\vx) + \langle \vc, \vx \rangle + \lambda \sum_{i=1}^n \max(0, f_i(\vx) - \tau_i).
\]
The GLMO is again an LP.
\end{example}

\end{document}